\newtheorem{theorem}{Theorem}[section]
\newtheorem{lemma}[theorem]{Lemma}
\newtheorem{proposition}[theorem]{Proposition}
\newtheorem{corollary}[theorem]{Corollary}
\newtheorem{MainTheorem}{Theorem}[]
\theoremstyle{definition}
\newtheorem{definition}[theorem]{Definition}
\theoremstyle{remark}
\newtheorem{remark}[theorem]{Remark}
\numberwithin{equation}{section}
\newcommand{\mm}{\hspace{.5mm}}
\newcommand{\mc}{\mathcal}
\newcommand{\ZZ}{\mathbb{Z}}
\newcommand{\spec}{\mathrm{Spec\hskip .5mm }}
\newcommand{\into}{\hookrightarrow}
\newcommand{\mono}{\rightarrowtail}
\newcommand{\epi}{\twoheadrightarrow}
\newcommand{\iso}{\xrightarrow{\sim}}
\DeclareMathOperator*{\colim}{colim}
\DeclareMathOperator*{\hocolim}{hocolim}
\tikzset{
	allign/.style={anchor=north, rotate=90, inner sep=1mm}
}
\address{K Arun Kumar, Department of Mathematics, IISER Mohali, Sector 81, S.A.S Nagar, Manauli PO, Punjab 140306, India.}
\email{karun1993@gmail.com}
\address{Girja S Tripathi, Department of Mathematics, IISER Tirupati, Jangalapalli Village Panguru PO, Tirupati 517619, India.}
\email{girja@labs.iisertirupati.ac.in}
\title{Endomorphisms of Equivariant Algebraic $K$-theory}
\author{K Arun Kumar and Girja S Tripathi}
\begin{document}

\bibliographystyle{alpha}

\begin{abstract} 
We prove that for the action of a finite constant group scheme, equivariant algebraic $K$-theory is represented by a colimit of Grassmannians in the equivariant motivic homotopy category. Using this result we show that the set of endomorphisms of the equivariant motivic space defined by $K_0(G,-)$ coincides with the set of endomorphisms of infinite Grassmannians in the equivariant motivic homotopy category by explicitly computing the equivariant $K$-theory of Grassmannians.
\end{abstract}
\maketitle

\section{Introduction}
 
Equivariant motivic homotopy theory deals with extending tools and results from motivic homotopy theory to the category of schemes equipped with a group scheme action and it has been studied by Krishna, \O stv\ae r, Heller, Voinegeau and Hoyois in \cite{HKO14}, \cite{HVO} and \cite{H20}. There is a long history of research in  generalised cohomology theories in this setting, and one of the important papers in the development of this subject is Thomason's work on algebraic $K$-theory of schemes with group actions in \cite{T87} where he defines equivariant versions of both $G$-theory and $K$-theory. Thomason was able to extend many of the results about algebraic $K$-theory to the equivariant version for a large family of group schemes. Using this, Heller, Krishna, and \O stv\ae r were able to prove (\cite[Corollary~5.2]{HKO14}) an equivariant analogue of Morel and Voevodsky's representability theorem for algebraic $K$-theory \cite[Proposition~4.3.9]{MV99}. We recall some details of this in Section \ref{EquivariantAlgebraicKTheoryGrassmannians}. Morel and Voevodsky were further able to show that algebraic $K$-theory is representable by the infinite Grassmannian presheaf $\ZZ\times Gr$\cite[Theorem ~4.3.13]{MV99}. This result allows us to use the geometric properties of Grassmannians to study algebraic $K$-theory. For instance, J\"oel Riou described the endomorphisms of algebraic $K$-theory using its representability by Grassmannians in \cite{R10}.

For equivariant algebraic $K$-theory, geometric representability was first considered by \O stv\ae r in \cite{O14} in the context of equivariant semi-topological $K$-theory. Recently Hoyois has proved in  \cite{H20} that, for tame group schemes, equivariant algebraic $K$-theory can be represented by the group completion of a disjoint union of Grassmannians.
In Theorems~\ref{thm:simplHtpyEq} and \ref{thm:MT} 
we prove this by using the technique from \cite{ST15} by showing that in the equivariant motivic homotopy category the infinite Grassmannian $\ZZ\times Gr_G$ is a geometric model for equivariant algebraic $K$-theory when $G$ is a constant finite group scheme whose order is invertible over the base. Using this representability we describe endomorphisms of equivariant algebraic $K$-theory analogous to the work done by Riou. It should be noted that Riou has used the cellular decomposition of (non-equivariant) Grassmannians to compute their $K$-groups. In absence of such a cellular decomposition for equivariant Grassmannians we compute their equivariant algebraic $K$-groups in Corollary~\ref{cor:pbtK0} by using the projective bundle theorem.  
This computation is detailed in Section \ref{Sec:K-theory_of_grassmannians}.

Let $G$ be a finite constant group scheme over a noetherian separated base scheme $S$. Denote the category  of $G$-equivariant separated schemes of finite type over $S$ by $Sch^G_S$  and  the  subcategory of smooth $S$-schemes by $Sm^G_S$. Let $M^G(S)$ (respectively $M^G_\bullet(S)$) denote the category of motivic (respectively pointed motivic) spaces over $S$ and $\mc{H}^G(S)$ (respectively $\mc{H}^G_\bullet(S)$) the equivariant motivic (respectively pointed motivic) homotopy category of $G$-schemes over $S$. For the main results of this paper we need the assumption that  the order of the group

be a unit in the ring of global sections $\Gamma(S,\ \mc{O}_S)$. Equivariant algebraic $K$-theory was defined by Thomason by applying Quillen's Q-construction to the exact category of $G$-vector  bundles. 

For a $G$-equivariant $S$-scheme $X$ let us denote the ring of global sections $\Gamma (X,\ \mc{O}_X)$ by $R$. In this paper define equivariant algebraic $K$-theory presheaf $\mathcal{K}\in M^G_\bullet(S)$ by taking the nerve of a functorial version of $S^{-1}S$-construction applied to the additive category $VB(G,\ \spec R)$ of $G$-vector  bundles. Under the assumption that the order of the group is a unit in $\Gamma(S,\ \mc{O}_S)$,
the categories $VB(G,\ \spec R)$ are split exact.

For the ring $R=\Gamma(S,\ \mc{O}_S)$, we have the regular representation $\rho_R$ of $G$. By the regular representation of $G$ over $S$, denoted by $\rho_S$, we will mean $\pi^* \rho_R$, where $\pi:S\rightarrow \spec\Gamma(S,\ \mc{O}_S)$ is the natural map.  This defines Grassmannia $G$-schemes $Gr(m,\mm \rho_S^n)$ over $\spec \Gamma(S,\ \mc{O}_S)$ parametrizing submodules of $\rho_S^n=\rho_S\oplus\cdots\rho_S\ (n\ \mathrm{copies})$ whose quotients are locally free of rank $n\cdot |G|-m$, and the presheaves $Gr(m,\mm \infty)$ and $Gr_G$
via colimits induced by the standard inclusions $\rho_S^n\hookrightarrow \rho_S^{n+1},\ v\mapsto (v,0)$. 
  
In the setting of $\infty$-topos, in \cite[Corollary 2.10 and (5.4)]{H20}, Hoyois proved that for a relatively large family of group schemes the equivariant K-theory space is representable in the unstable motivic homotopy category by the group completion of a disjoint union of infnite Grassmannians .
We use techniques from \cite{ST15} to show that this group completion is in fact the infinite Grassmanian $\ZZ\times Gr_G$ and as an application we have the following theorem.

\begin{MainTheorem}
 Let $S$ be a regular noetherian scheme  of finite Krull dimension and $G\to S$ a constant group $S$-scheme  such that the order of the group is a unit in $\Gamma(S,\ \mc{O}_S)$. The equivariant algebraic $K$-theory is represented by the infinite Grassmannian in the motivic homotopy category, and in particular, for a $G$-scheme $X\in Sm^G_S$,  there are isomorphisms
\[K_n(G, X) \simeq [ X_{+} \wedge S^n, \ \mathbb{Z}\times Gr_G]_{\mc{H}^G_\bullet(S)}\ \ \ \ n\geq 0\] 
in the pointed equivariant motivic homotopy category for $G$-equivariant smooth $S$-schemes. 
\end{MainTheorem} 
For precise details see Theorems \ref{thm:simplHtpyEq} and \ref{thm:MT}. 
For people familiar with the setting it may be possible to prove the above theorem entirely from Hoyois's using the McDuff-Segal Theorem on group completions. We prove it using the approach in \cite{ST15} as we feel that it can be generalised to a broader family of group schemes in the future.

This result allows us to use the geometric properties of $Gr(m,\rho^n)$ to prove equivariant analogues of Riou's results in \cite{R10} on morphisms of the algebraic K-theory presheaf, useful in the study of equivariant characteristic classes.
\begin{MainTheorem}
For any regular noetherian base scheme $S$ of finite Krull dimension and $G\to S$ a constant group $S$-scheme such that the order of the group is a unit in $\Gamma(S,\ \mc{O}_S)$ there is a canonical bijection
\[ [(\ZZ\times Gr_G)^n,\mathbf{R}\Omega^k \ZZ\times Gr_G]_{\mc{H}^G(S)}\iso Hom_{M^G(S)}(K_0(G,\ -)^n,K_k(G,\ -))  \]
for each $n,k\in\mathbb{N}$. In particular,
\[ [\ZZ\times Gr_G,\ZZ\times Gr_G]_{\mc{H}^G(S)}\iso End_{M^G(S)}(K_0(G,\ -)) \]
when $n=1$ and $k=0$.
\end{MainTheorem}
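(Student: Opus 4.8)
The plan is to identify both sides of the claimed bijection with one and the same inverse limit $\lim_{j}K_k(G,\mm Y_j)$, where $\{Y_j\}_{j\in\mathbb{N}}$ is the sequential diagram of smooth $G$-schemes of finite type whose homotopy colimit is $(\ZZ\times Gr_G)^n$: concretely the $Y_j$ are the finite products of the finite Grassmannian stages (a bounded piece of $\ZZ$ times $Gr(m,\mm\rho_S^{\mm j})$) out of which $\ZZ\times Gr_G$ is built, with transition maps induced by the standard inclusions $\rho_S^{\mm j}\into\rho_S^{\mm j+1}$. Since these transition maps are monomorphisms of representable presheaves, the colimit defining $(\ZZ\times Gr_G)^n$ is already a homotopy colimit; moreover smooth $G$-schemes are compact objects of $\mc{H}^G(S)$. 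I would use both facts repeatedly, and I would use the first Main Theorem above to turn mapping spaces into $K$-groups.

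For the left-hand side, I would map out of the sequential homotopy colimit $(\ZZ\times Gr_G)^n=\hocolim_j Y_j$ into $W=\mathbf{R}\Omega^k(\ZZ\times Gr_G)$, obtaining the Milnor exact sequence
\[ 0 \longrightarrow \lim\nolimits^{1}_{j}\,[Y_j,\mathbf{R}\Omega^{k+1}(\ZZ\times Gr_G)]_{\mc{H}^G(S)} \longrightarrow [(\ZZ\times Gr_G)^n, W]_{\mc{H}^G(S)} \longrightarrow \lim\nolimits_{j}\,[Y_j, W]_{\mc{H}^G(S)} \longrightarrow 0, \]
and, using the adjunction $(-)_+\dashv U$ together with the first Main Theorem, rewriting $[Y_j,\mathbf{R}\Omega^{i}(\ZZ\times Gr_G)]_{\mc{H}^G(S)}\cong[(Y_j)_+\wedge S^{i},\ZZ\times Gr_G]_{\mc{H}^G_\bullet(S)}\cong K_i(G,\mm Y_j)$. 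The crucial point is the vanishing of the $\lim^{1}$ term. For this I would invoke the recalled equivariant Grassmannian-bundle computation --- legitimate because $|G|$ is a unit, so the categories $\mathscr{S}(G,-)$ are split exact and the usual projective/Grassmannian bundle arguments carry over $G$-equivariantly: each $K_*(G,\mm Gr(m,\mm\rho_S^{\mm j}))$ is a free $K_*(G,\mm S)$-module on classes built from exterior powers of the tautological subbundle, these restrict compatibly along $Gr(m,\mm\rho_S^{\mm j})\into Gr(m,\mm\rho_S^{\mm j+1})$, and by the K\"unneth formula for these cellular $G$-schemes the same holds for the products $Y_j$. Hence the towers $\{K_{k+1}(G,\mm Y_j)\}_j$ have surjective transition maps, satisfy the Mittag-Leffler condition, and the $\lim^{1}$ vanishes, giving $[(\ZZ\times Gr_G)^n,\mathbf{R}\Omega^k(\ZZ\times Gr_G)]_{\mc{H}^G(S)}\iso\lim_{j} K_k(G,\mm Y_j)$.

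For the right-hand side I would use the tautological $n$-tuple $u_j\in K_0(G,\mm Y_j)^n$ on each $Y_j$ obtained by pulling back tautological classes along the $n$ projections; the family $\{u_j\}_j$ is compatible along the transition maps, so evaluation $\tau\mapsto(\tau_{Y_j}(u_j))_j$ defines a map $Hom_{M^G(S)}(K_0(G,\mm -)^n, K_k(G,\mm -))\to\lim_{j} K_k(G,\mm Y_j)$, well defined by naturality of $\tau$ on the scheme morphisms $Y_j\into Y_{j+1}$. To show it is bijective I would argue that for a smooth $G$-scheme $X$ the first Main Theorem identifies $K_0(G,\mm X)^n$ with $[X_+,(\ZZ\times Gr_G)^n]_{\mc{H}^G_\bullet(S)}$, so by compactness of $X$ every class factors through a finite stage, $x=\psi^*(u_j)$; and for $X$ affine $\psi$ may be taken to be a genuine $G$-morphism --- this is exactly what the $S^{-1}S$-construction provides, namely that on affines $K_0(G,\mm X)$ is the filtered colimit of the sets of $G$-morphisms from $X$ into the finite stages. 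Naturality of $\tau$ then forces $\tau_X(x)=\psi^*(\tau_{Y_j}(u_j))$ on the affine site, which gives injectivity and, conversely, lets one manufacture a natural transformation from any compatible family $(a_j)_j$; a Zariski/Nisnevich-descent plus compactness argument then promotes both statements from affine to arbitrary smooth $X$, so the right-hand side is likewise $\cong\lim_{j} K_k(G,\mm Y_j)$.

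Finally I would check that the two identifications coincide: both are ``restrict to the finite stages $Y_j$'', and under the first Main Theorem a map $f\in[(\ZZ\times Gr_G)^n,\mathbf{R}\Omega^k(\ZZ\times Gr_G)]_{\mc{H}^G(S)}$ corresponds to the natural transformation $X\mapsto\bigl(K_0(G,\mm X)^n=[X_+,(\ZZ\times Gr_G)^n]\xrightarrow{\,f\circ(-)\,}[X_+,\mathbf{R}\Omega^k(\ZZ\times Gr_G)]=K_k(G,\mm X)\bigr)$, whose value on $u_j$ is precisely the restriction of $f$ to $Y_j$; this yields the asserted canonical bijection for all $n,k$, the case $n=1$, $k=0$ being the displayed statement. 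I expect the main obstacle to be the surjectivity in the right-hand side step --- that \emph{every} natural transformation of the set-valued presheaves $K_0(G,\mm -)^n\to K_k(G,\mm -)$ comes from a map of the Grassmannian models in $\mc{H}^G(S)$ --- since this forces one to reconcile the purely presheaf-theoretic naturality on $Sm^G_S$ with the pull-back structure inherited from $\mc{H}^G(S)$ (through affine representability of $K$-theory, compactness of smooth $G$-schemes, and descent); and it is precisely here that the $\lim^{1}$-vanishing of the Grassmannian $K$-theory towers is needed, so that the inverse limit faithfully records maps out of the ind-scheme $(\ZZ\times Gr_G)^n$.
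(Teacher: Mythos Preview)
Your strategy is essentially the one the paper uses. Both sides are identified with $\lim_j K_k(G,\mm Y_j)$: the left-hand side via the Milnor $\lim^1$ sequence together with Mittag--Leffler (verified through the equivariant projective/Grassmannian bundle formula and K\"unneth, exactly as you indicate), and the right-hand side via the evaluation map $\tau\mapsto(\tau_{Y_j}(u_j))_j$, whose injectivity is the only nontrivial point. The paper packages this as a general criterion (their Theorem~\ref{thm:gen_bijection}) with two hypotheses --- no phantoms, and surjectivity of $X(U)\to\underline{\pi}_0 X(U)$ for affine $U$ --- and then checks both; your ``for affine $X$, $\psi$ may be taken to be a genuine $G$-morphism'' is precisely their Theorem~\ref{th:ontopi0}.

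The one place the paper is more careful than your outline is the step ``promote from affine to arbitrary smooth $X$ by Zariski/Nisnevich descent.'' The set-valued presheaves $K_0(G,-)^n$ and $K_k(G,-)$ are not Nisnevich sheaves, so plain descent does not immediately show that two natural transformations agreeing on affines agree everywhere. The paper handles this by proving an equivariant Jouanolou--Thomason trick (Theorem~\ref{th:Jtrick} and Corollary~\ref{cor:Jtrick}): every $X\in Sm^G_S$ admits an $N$-affine bundle $W\to X$ with $W$ affine, and since $\underline{\pi}_0(\ZZ\times Gr_G)^n$ and $\underline{\pi}_0 E$ are invariant under equivariant motivic equivalences, agreement on $W$ forces agreement on $X$. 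This is the precise mechanism that makes your reduction to affines go through, and it requires an argument (the construction uses that $|G|$ is a unit and that in the equivariant Nisnevich topology one has affine covers). A smaller quibble: your appeal to ``compactness of smooth $G$-schemes in $\mc{H}^G(S)$'' is not literally correct in the unstable category, but you only use it to lift classes over affine $X$ to honest maps into some finite stage $Y_j$, which is exactly the surjectivity statement above, so the argument is unaffected.
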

We prove this by computing the ring structure of $K_*(G,\ Gr(m, \rho^n)^r)$ using the projective bundle formula; see Corollary ~\ref{cor:pbtK0}. This bijection for $K_0(G,\ -)$ respects algebraic structures; see Corollary \ref{Algebraic Structures}.
\begin{MainTheorem}
 In the category $\mc{H}^G(S)$ there exists a unique structure of a special $\lambda$-ring with duality on $\ZZ\times Gr_G$ such that the corresponding $\lambda$-ring with duality structure on $K_0(G,X)$ is the standard one for all $X\in Sm^G_S$.
\end{MainTheorem}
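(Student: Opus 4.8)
The asserted structure is, concretely, a compatible family of morphisms among the finite powers of $\ZZ\times Gr_G$, and the plan is to transport the corresponding family that already lives on the presheaf $K_0(G,-)$ across the comparison provided by Theorems \ref{thm:MT} and \ref{thm:main}. Recall that a special $\lambda$-ring with duality structure on an object $A$ of a category with finite products is a model, in that category, of the algebraic theory whose operations are the binary $+$, $\times$, the nullary $0$, $1$, the unary $-$, the unary $\lambda^n$ for $n\ge 0$, and the unary duality $D$, and whose relations are the commutative-ring axioms, the identities $\lambda^0(x)=1$, $\lambda^1(x)=x$, $\lambda^n(x+y)=\sum_{i=0}^n\lambda^i(x)\lambda^{n-i}(y)$, the specialness identities expressing $\lambda^n(xy)$ and $\lambda^m(\lambda^n x)$ through the universal integral polynomials in the $\lambda^i$, and the duality axioms $D\circ D=\mathrm{id}$, $D(1)=1$, $D(x+y)=Dx+Dy$, $D(xy)=Dx\cdot Dy$, $D\circ\lambda^n=\lambda^n\circ D$. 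Each operation of arity $k$ is a morphism $A^k\to A$, and each relation is a commutative diagram among the powers $A^k$; so giving the structure on $\ZZ\times Gr_G$ means giving such a family of morphisms in $\mc{H}^G(S)$ among the objects $(\ZZ\times Gr_G)^k$.

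First I would record that this model already exists on the presheaf. For $X\in Sm^G_S$ the group $K_0(G,X)$ carries its standard structure: the ring structure from $\oplus$ and $\otimes$ of $G$-equivariant vector bundles, the operations $\lambda^n$ from exterior powers, and the involution $D$ from $V\mapsto V^\vee$. Since pullback along a morphism of $Sm^G_S$ is exact, symmetric monoidal and commutes with $\Lambda^n$ and with $(-)^\vee$, this structure is natural in $X$; and it is special by the usual arguments, which --- along with Theorems \ref{thm:MT} and \ref{thm:main} themselves --- is where the standing hypotheses on $S$ enter, exactly as in the non-equivariant case of \cite{ST15}. Thus the presheaf $K_0(G,-)\in M^G(S)$ is a model of the theory above: addition lies in $Hom_{M^G(S)}(K_0(G,-)^2,K_0(G,-))$, duality in $Hom_{M^G(S)}(K_0(G,-),K_0(G,-))$, and so on, with all the relations holding as identities of natural transformations.

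Next I would set up the comparison and transport. By the case $n=0$ of Theorem \ref{thm:MT} there are isomorphisms $K_0(G,X)\cong[X,\ZZ\times Gr_G]_{\mc{H}^G(S)}$ natural in $X$; since mapping out of a fixed object preserves finite products, $(\ZZ\times Gr_G)^k$ represents the presheaf $X\mapsto K_0(G,X)^k$. Let $\mathcal{F}\subseteq\mc{H}^G(S)$ be the full subcategory on the objects $(\ZZ\times Gr_G)^k$, $k\ge 0$, which is closed under finite products, and let $\Phi\colon\mathcal{F}\to M^G(S)$ be the functor $A\mapsto\bigl(X\mapsto[X,A]_{\mc{H}^G(S)}\bigr)$; this is product-preserving with $\Phi\bigl((\ZZ\times Gr_G)^k\bigr)=K_0(G,-)^k$. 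The case $k=0$ of Theorem \ref{thm:main}, together with the natural decomposition $Hom(B,C\times C')=Hom(B,C)\times Hom(B,C')$ on both sides, says precisely that $\Phi$ is a bijection on every hom-set between powers of $\ZZ\times Gr_G$; and since the bijection of Theorem \ref{thm:main} is induced by the functors $[X,-]_{\mc{H}^G(S)}$ it is compatible with composition, so $\Phi$ is fully faithful and product-preserving, with essential image containing every $K_0(G,-)^k$. Transporting the standard model along $\Phi$ --- lift each generating operation through the bijection of Theorem \ref{thm:main} to the unique morphism among the $(\ZZ\times Gr_G)^k$ with the prescribed image, and carry over each axiom diagram using the faithfulness of $\Phi$ --- produces a special $\lambda$-ring with duality structure on $\ZZ\times Gr_G$ whose image under $[X,-]_{\mc{H}^G(S)}$ is, by construction, the standard structure on $K_0(G,X)$ for all $X$. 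Uniqueness follows from the injectivity in Theorem \ref{thm:main}: any structure on $\ZZ\times Gr_G$ with this property induces the standard natural transformations on $K_0(G,-)$, hence agrees with the one just built.

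The one step that is not purely formal, and the one I expect to demand the most care, is the compatibility of the bijection of Theorem \ref{thm:main} with composition and with finite products, invoked above to upgrade $\Phi$ from a bijection of hom-sets to an honest functor: this must be spelled out, since it is exactly what moves the commutative diagrams encoding the ring, $\lambda$ and duality axioms out of $M^G(S)$ and into $\mc{H}^G(S)$. One should also check separately that the duality operation --- which is genuine extra structure, not determined by the underlying ring --- really is produced: via Theorem \ref{thm:main} with $n=1$, $k=0$ it corresponds to a definite element $D\in[\ZZ\times Gr_G,\ZZ\times Gr_G]_{\mc{H}^G(S)}$, and the relations $D\circ D=\mathrm{id}$, $D(1)=1$, $D(x+y)=Dx+Dy$, $D(xy)=Dx\cdot Dy$ and $D\circ\lambda^n=\lambda^n\circ D$ hold in $\mc{H}^G(S)$ because they hold, naturally in $X$, in every $K_0(G,X)$.
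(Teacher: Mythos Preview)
Your proposal is correct and follows exactly the approach the paper intends: the paper's own argument for this corollary is the single sentence ``As in the non-equivariant case, the above bijection respects algebraic structures hence we get the equivariant analogue of \cite[Thm~2.3.1]{R10},'' and what you have written is precisely that argument spelled out --- transport the standard special $\lambda$-ring-with-duality structure on the presheaf $K_0(G,-)$ across the functorial, product-compatible bijection of Theorem~\ref{thm:main}, using injectivity for uniqueness. The one minor slip is the pointer to \cite{ST15} for the non-equivariant case; the relevant reference (as the paper itself indicates) is Riou~\cite{R10}.
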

  This allows us to lift the Adams operations on the representation ring, uniquely, to higher equivariant $K$-groups. In a forthcoming work we will extend the results above to a broader family of group schemes and use them to study equivariant algebraic cobordism. Specifically we will prove a version of Equivariant Conner Floyd isomorphism \cite{O82} in the motivic setting. \\

\subsubsection*{Notations and conventions.} Throughout this paper, by a scheme we will mean a Noetherian separated scheme of finite Krull dimension. We will call a functor between categories a weak equivalence if the induced map on nerves is a weak equivalence of simplicial sets. To simplify the presentation, we will often use the same notation for a category and its nerve. We will use $\textsl{g}$ to denote the order of the group $G$ in later sections of the paper where it appears very frequently.

\subsubsection*{Acknowledgment}
    This work was carried out while the first author was a postdoctoral fellow at IISER Tirupati. The authors wish to thank the anonymous referee for useful inputs on improving the paper.

\section{Brief recollection of Equivariant motivic homotopy category} 

This section is a quick recap from \cite{HKO14} and \cite{HVO}. For a $S$ of finite Krull dimension and a finite group scheme $G\rightarrow S$, we denote the category of $G$-equivariant motivic spaces over $S$ by $M^G(S)$. It is the category  $\Delta^{op}PShv(Sm_S^G)$  of simplicial presheaves of sets on $Sm^G_S$. There is a global model structure $M^G_{\mathrm{gl}}(S)$ on $M^G(S)$ with global weak equivalences (and global fibrations)  given by schemewise weak equivalence (and schemewise Kan fibrations), and cofibrations defined by the left lifting property.

The $G$-equivariant Nisnevich local model structure $M_{\mathrm{Nis}}^G(S) := L_{\Sigma_{\mathrm{Nis}}^{\mathrm{hp}}}M^G_{\mathrm{gl}}(S)$ is the left Bousfield localization of $M^G_{\mathrm{gl}}(S)$ with respect to the class of morphisms 
\begin{equation}\label{eq:nis-cd-square}\Sigma_{\mathrm{Nis}}^{\mathrm{hp}} : = \{ Q^{\mathrm{hp}} \rightarrow X \}_Q \cup \{ \emptyset \rightarrow h_{\emptyset}\}\end{equation}
where $Q$ runs through all the cartesian squares 
\begin{equation}
\label{eq:nis-cd-square II}
\begin{tikzcd}	
B \arrow[r] \arrow[d] & Y \arrow[d, "p"]
\\
A \arrow[r, hook, "i"] & X 			
\end{tikzcd}
\end{equation}
with $p$ etale and $j$ open immersion such that $ (Y-B)_{\mathrm{red}}  \cong (X-A)_{\mathrm{red}} $, and $Q^{\mathrm{hp}}$ is the homotopy pushout of $Q$ in $M_{\mathrm{gl}}^G(S)$. Here $\emptyset$ is the initial object in the category $\Delta^{op}PShv(Sm_S^G)$ and $h_\emptyset$ is the simplicial presheaf represented by the empty scheme.

The collection of all the cartesian squares (\ref{eq:nis-cd-square II}) forms a $cd$-structure which is complete, regular and bounded. This $cd$-structure induces the equivariant Nisnevich topologies on $Sch_S^G$ and $Sm_S^G$. We will denote the corresponding sites by $[Sch^G_S]_{Nis}$ and $[Sm^G_S]_{Nis}$ respectively. These sites have enough points. 
\begin{theorem}
\label{Points}
Let $G$ be a finite group scheme over $S$. A map $\phi: \mc{F}_1\rightarrow \mc{F}_2$ of sheaves of sets on $[Sch^G_S]_{Nis}$  (respectively $[Sm^G_S]_{Nis}$) is an isomorphism if and only if $\phi_R: \mc{F}_1(\spec R)\rightarrow \mc{F}_2(\spec R)$ is an isomorphism for all (respectively all smooth) semilocal affine $G$-schemes $\spec R$ over $S$ with a single closed orbit for Henselian rings $R$. 
\end{theorem}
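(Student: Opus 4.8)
The plan is to deduce this from the fact that the semilocal affine $G$-schemes with a single closed orbit furnish a conservative family of points for the equivariant Nisnevich site, so that a morphism of sheaves is an isomorphism exactly when it induces an isomorphism on every associated stalk. Only the ``if'' direction carries content: conversely, an isomorphism of sheaves is in particular a natural isomorphism of the underlying presheaves, hence an isomorphism on sections over every object of $Sch^G_S$ (respectively $Sm^G_S$), and so over every $\spec R$ of the stated form.

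The first step is to use the $cd$-structure given by the squares (\ref{eq:nis-cd-square}), which is recorded above to be complete, regular and bounded. For a topology generated by such a $cd$-structure, isomorphisms of sheaves may be detected on the local objects of the topology, namely those $G$-schemes over which every distinguished square admits an equivariant section; I would first make this reduction precise and then identify these local objects explicitly.

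The main step is to show that the local objects are precisely the semilocal Henselian $G$-schemes with a single closed orbit, realized as equivariant Henselizations. Given a $G$-scheme $X$ and a closed point $x$, the orbit $Gx$ is finite because $G$ is finite; forming the semilocal ring of $X$ along $Gx$ and Henselizing produces a ring $R^{h}$ with an induced $G$-action that permutes its finitely many closed points transitively, so that $\spec R^{h}$ is a semilocal Henselian affine $G$-scheme with a single closed orbit. The content is that every equivariant distinguished square splits after base change to $\spec R^{h}$: over a Henselian local ring an \'etale map inducing an isomorphism on the relevant closed fibre admits a section, and the single-orbit hypothesis is exactly what permits the sections chosen at the different points of the orbit to be assembled into one $G$-equivariant section. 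I expect this assembling of local sections into an equivariant one to be the principal obstacle, and it is where the single-orbit condition — rather than a plain semilocal ring — is essential.

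The final step is to identify the stalk of a sheaf $\mc{F}$ at such a point with $\mc{F}(\spec R^{h})$. Writing $\spec R^{h}$ as the cofiltered limit of the equivariant Nisnevich neighborhoods of the orbit $Gx$, whose transition maps are affine and \'etale, the stalk $\colim_{U} \mc{F}(U)$ is identified with $\mc{F}(\spec R^{h})$ by the continuity of Nisnevich sheaves, i.e.\ their conversion of such cofiltered limits of schemes into colimits of sections. Conservativity of this family of points then gives the theorem: since each $\spec R^{h}$ is itself a semilocal affine $G$-scheme with a single closed orbit, the hypothesis that $\phi_{R}$ is an isomorphism for all such $\spec R$ forces $\phi$ to be an isomorphism on every stalk, hence an isomorphism of sheaves. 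For the smooth site $[Sm^G_S]_{Nis}$ the same argument applies with $X$ taken smooth over $S$, so that the neighborhoods and the limit $\spec R^{h}$ are pro-smooth and the detecting objects are the smooth semilocal single-orbit $G$-schemes.
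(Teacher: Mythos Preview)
The paper does not supply its own proof of this statement: it is quoted verbatim as \cite[Theorem 3.14]{HVO} (see also \cite[Proposition 2.17]{HKO14}) and used as input, so there is no in-paper argument to compare against. Your outline is the standard strategy carried out in those references---construct points of the equivariant Nisnevich topos as equivariant Henselizations along a closed orbit and verify conservativity---and is correct in spirit.

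One point worth flagging: the Henselization $\spec R^{h}$ is essentially never of finite type over $S$, hence is not literally an object of $Sch^G_S$ as defined in the paper; the expression $\mc{F}(\spec R^{h})$ must therefore be \emph{defined} as the filtered colimit over the equivariant Nisnevich neighborhood system, which you do in your final step. You should make clear that this is a definition rather than an instance of evaluating the sheaf on an object of the site, and that the theorem's hypothesis ``for all semilocal affine $G$-schemes $\spec R$'' is to be read in this extended sense (or, equivalently, restricted to the Henselian ones).
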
   
When $S=\spec(k)$ with $k$ a field, this is \cite[Theorem~3.14]{HVO}. By the discussion in the proof of \cite[Proposition 2.17]{HKO14} this generalises to arbitrary $S$. The above theorem is stated in terms of semilocal affine $G$-schemes in \cite{HVO} but it follows from their proof that it suffices to consider semilocal Henselian affine $G$-schemes. 
\begin{remark}
This theorem enables us to recognise equivariant Nisnevich simplicial weak equivalences in the equivariant Nisnevich local model structure.  This is a reason for our choice of the simpler affine version of equivariant $K$-theory presheaf in Definition~\ref{AffineVersionEuivariantKTheory}; see Theorem~\ref{ThomasonEuivariantKTheory}. This fact has also been used in the proof of Theorem \ref{thm:MT}, which in turn relies on Theorem \ref{thm:simplHtpyEq}. 
\end{remark}

The $G$-equivariant motivic model structure $M^G_{\mathrm{mot}}(S) := L_{\Sigma_{\mathbb{A}^1}}M^G_{\mathrm{Nis}}(S)$ is the left Bousfield localizaiton of $M^G_{\mathrm{Nis}}(S)$ with respect to the class of all the projections
\[ \Sigma_{\mathbb{A}^1} := \{ X \times \mathbb{A}^1 \rightarrow X: X \in Sm_S^G \}\]
where $\mathbb{A}^1$ has trivial $G$-action.
The $G$-equivariant motivic homotopy category $\mc{H}^G(S)$ is the homotopy category associated to the equivariant motivic model structure $M^G_{\mathrm{mot}}(S)$. The pointed $G$-equivariant motivic homotopy category $\mc{H}^G_\bullet(S)$ is obtained by a similar procedure from the category of pointed simplicial presheaves $M^G_\bullet(S)=\Delta^{op}PShv_\bullet(Sm_S^G)$. 

\section{ Equivariant algebraic \texorpdfstring{$K-$}{K-}theory and Grassmannians }
\label{EquivariantAlgebraicKTheoryGrassmannians} 
The equivariant algebraic $K$-theory was defined by Thomason by considering equivariant vector bundles on schemes with an action of a group scheme. 
For an action $\theta$ of a group scheme $G$ on a scheme $X\in Sch^G_S$, a $G$-module on $X$ is a quasi-coherent $\mathcal{O}_X$-module $E$ on $X$ together with an isomorphism of $\mathcal{O}_{G\times X}$-modules 
$$\theta^*E\ \xrightarrow{\simeq}\ p_2^*E $$
on $G\times X$
satisfying compatibility of pullbacks of quasi-coherent modules with the action. See \cite[(1.2)]{T87}. A $G$-vector bundle on a $G$-scheme $X$ is $G$-module on $X$ that is locally free as an $\mathcal{O}_X$-module which is locally of finite rank.  For affine $G$-schemes we will use the term $G$-equivariant finitely generated projective module as well for $G$-vector bundle. 

For a scheme with $G$-action $X\in Sm^G_S$  the category $VB(G,\mm X)$ of $G$-vector bundles over $X$  is an exact category. The $i^{th}$ equivariant algebraic $K$-group $K_i(G,\mm X)$ of a $G$-scheme $X$ is defined as the $(i+1)^{\mathrm{st}}$-homotopy group of the nerve of Quillen's $Q$-construction applied to the exact category $VB(G,\mm X)$. Thus,
\[ K_i(G,\mm X) = \pi_{i+1}(\mathscr{N}Q(VB(G,\mm X)), 0),\ \ \ \ i\geq 0\]
where $0$ denotes the class of the zero $G$-vector bundle. For $X=\spec R$ we will also use $K_i(G,\mm R)$ to denote the groups  $K_i(G,\mm X)$. Alternatively we can use the simplicial loop space $(\Omega\mathscr{N}Q(VB(G,\mm X)),0)$ to get the same index on the groups. In Thomason's paper \cite{T87} he uses Waldhausen's $S_\bullet$-construction to define  the equivariant $K$-groups but these give the same homotopy groups for any exact category. The identification 
\[X\mapsto \mc{K}^G(X)=\Omega \mathscr{N}Q(VB(G,\mm X)),\ \forall X\in Sm^G_S \] 
 can be turned into a presheaf by the standard rectification procedure used in algebraic $K$-theory as described for example in \cite[IV.10.5]{W13}.
   We call this the equivariant algebraic $K$-theory presheaf $\mc{K}^G \in M^G(S)$.
 We will now show that for an affine $G$-equivariant scheme $X$ the category $VB(G,\mm X)$ is in fact a split exact category and hence we can use the $S^{-1}S$-construction to define equivariant algebraic $K$-theory for affine $G$-schemes.
 We have the following analogue of Morel and Voevodsky's representability theorem.
 \begin{theorem}\label{thm:first_representability}
 	Let $S$ be a regular noetherian scheme of finite Krull dimension and $G$ a finite group. Then, there are isomorphisms
 	\[K_n(G, X) \simeq [ X_{+} \wedge S^n, \mc{K}^G]_{\mc{H}_\bullet^G(S)} \]
 	for all  $n\geq 0$ and all $X\in Sm^G_S$.
 \end{theorem}
This follows from \cite[Corollary~5.2]{HKO14} and the fact that the constant group scheme $G\to S$ satisfies the resolution property for every $X\in Sm^G_S$ when $S$ is as above \cite[Lemma~2.10]{T87b}. Our goal in this section is to show that $\mc{K}^G$ can be replaced by a directed colimit of Grassmannians as in the non-equivariant case. 
\begin{lemma}
\label{equivariant splitting}
For an affine $G$-scheme $X$, under our assumption in the paper that the order of the group is invertible in $\Gamma(S,\mm \mc{O}_S)$, every short exact sequence of $G$-vector  bundles splits equivariantly, and hence, the category $VB(G,\mm X)$ is split exact.
\end{lemma}
\begin{proof} Given a short exact sequence $0\rightarrow \mathcal{E}\xrightarrow{i} \mathcal{L}\xrightarrow{\phi} \mathcal{M}\rightarrow 0$ in $VB(G,\mm X)$
choose a (non-equivariant) splitting $s:\mathcal{M}\rightarrow \mathcal{L}$ and observe that the morphism 
$$\bar{s}:\mathcal{M}\rightarrow \mathcal{L}, \ \ \ a\mapsto \frac{1}{|G|}\sum_{g\in G}g\cdot s(g^{-1}\cdot a)\ \ \ \ \mathrm{for}\  a\in \mathcal{M}(X)$$
of vector bundles is a $G$-equivariant splitting of $\phi$.  
\end{proof}
Let us denote by $\mathscr{S}_R$ the category having the same objects as the category $VB(G,\mm \spec R)$ but morhpisms being all the isomorphisms in  $VB(G,\mm \spec R)$. The category $\mathscr{S}_R$ is a symmetric monoidal category with respect to direct sum and we can form the category $\mathscr{S}_R^{-1}\mathscr{S}_R$. Recall that objects in the category $\mathscr{S}_R^{-1}\mathscr{S}_R$ are pairs $(A_0,\mm A_1)$ of objects in the category $\mathscr{S}_R$; and, a morphism $(A_0,\mm A_1)\rightarrow (B_0,\mm B_1)$ consists of an equivalence class of triples $(C, a_0,\mm a_1)$ where $C\in \mathscr{S}_R $ and $a_i: A_i\oplus C\rightarrow B_i,\ \ i=0,\mm 1$, are isomorphisms in $\mathscr{S}_R$. In what follows we will describe a small category 
$\mathscr{S}_R^+$ equivalent to the group completion 
$\mathscr{S}_R^{-1}\mathscr{S}_R$. 

\begin{definition}[Regular representation of $G$ over a scheme $S$]
\label{RegRepresentation/Scheme} For an equivariant affine $G$-scheme $(\spec R,\tau)$, let $\rho_{_{R_\tau}}$ (or simply $\rho_{_R}$) be the skew group-ring $R_\tau G$ viewed as the twisted regular representation of $G$ over $R$. The $G$-action on $\rho_{_{R_\tau}}$ is given by \[g'\cdot(\Sigma_{g\in G}x_g\cdot e_g)= \Sigma_{g\in G} (g'\cdot x_g)\cdot e_{g'g},\] where $\{e_g:g\in G\}$ is the standard basis of $\rho_{_{R_\tau}}$. For a scheme $S$ by the regular representation $\rho_S$ of $G$ over $S$ we will mean $\pi^*\rho_{_R}$, the pullback of $\rho_{_R}$  via the natural map $\pi: S\rightarrow \spec\ \Gamma\mm (S,\mm \mc{O}_S)$, where $R=\Gamma\mm (S,\mm \mc{O}_S)$. 
We have direct sums $\rho_S^n$ and embeddings $\rho_S^n\hookrightarrow \rho_S^{n+1},\ v\mapsto (v,0)$  of regular representations. Let $\rho_S^\infty$ be the increasing union of these embeddings as a $G$-module on $S$.
\end{definition}

\begin{lemma}
\label{Equivalent Category of Summands in Regular Representations}
For an affine $G$-scheme $\spec R$, under our assumption in the paper that the order of the group is invertible in $R$, every $G$-equivariant finitely generated projective $R$-module is a summand in a finite direct sum of regular representations.
\end{lemma}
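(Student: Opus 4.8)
The plan is to reduce this to exactly the same averaging (transfer) trick used in Lemma~\ref{equivariant splitting}. Throughout I will use that a $G$-equivariant $R$-module is the same datum as a module over the skew group ring $\rho_{_R}=R_\tau G$, and that, as a $G$-equivariant $R$-module, $\rho_{_R}$ is free of rank $|G|$ over $R$ on the basis $\{e_g\}_{g\in G}$, with $G$-action $h\cdot(r\,e_g)=\tau_h(r)\,e_{hg}$. In particular $\rho_{_R}^{\,n}$ is the ``obvious'' $G$-equivariant $R$-module, and ``summand in a direct sum of regular representations'' means direct summand of some $\rho_{_R}^{\,n}$ in the category of $G$-equivariant $R$-modules.

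First I would build a $G$-equivariant $R$-linear surjection onto $M$. Choosing $R$-module generators $m_1,\dots,m_n$ of the (finitely generated) module $M$, set $\varphi\colon\rho_{_R}^{\,n}\to M$ by $e_g^{(i)}\mapsto g\cdot m_i$ on the basis of the $i$-th summand, extended $R$-linearly. This map is $R$-linear by construction, it is surjective since already $m_1,\dots,m_n$ lie in its image, and a direct check on basis vectors — comparing $h\cdot(r\,e_g)=\tau_h(r)e_{hg}$ on the source with the equivariance relation $h\cdot(r\,m)=\tau_h(r)(h\cdot m)$ on $M$ — shows $\varphi$ is $G$-equivariant.

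Next, since $M$ is projective over $R$, the $R$-linear surjection $\varphi$ splits $R$-linearly: there is an $R$-linear $t\colon M\to\rho_{_R}^{\,n}$ with $\varphi\circ t=\mathrm{id}_M$. This $t$ need not be $G$-equivariant, so — as in Lemma~\ref{equivariant splitting} — I replace it by its average
\[ \bar t\colon M\to\rho_{_R}^{\,n},\qquad \bar t(m)=\frac{1}{|G|}\sum_{g\in G}g\cdot t(g^{-1}\cdot m), \]
which is the one place the hypothesis $|G|\in R^{\times}$ is used. One then checks that $\bar t$ is $G$-equivariant (re-indexing the sum $g\mapsto hg$), that it remains $R$-linear (using the equivariance of both $M$ and $\rho_{_R}^{\,n}$ together with $\tau_g\tau_{g^{-1}}=\mathrm{id}$), and that $\varphi\circ\bar t=\mathrm{id}_M$ (since $\varphi$ is equivariant and $\frac1{|G|}\sum_g g\cdot(g^{-1}\cdot m)=m$). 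Hence $\bar t$ is a section of $\varphi$ in the category of $G$-equivariant $R$-modules, so $M$ is a direct summand of $\rho_{_R}^{\,n}$ there, which is the claim.

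The step most likely to require care — really the only subtlety — is the bookkeeping in the two equivariance verifications: that $\varphi$ is genuinely $G$-equivariant, and that the averaged section $\bar t$ stays $R$-linear. In both one must track the twist by $\tau$ carefully, in the $G$-action on $\rho_{_R}^{\,n}$ and in the equivariance relation defining a $G$-equivariant $R$-module. Everything else is just the transfer argument already used to split equivariant short exact sequences, so I expect no genuine obstacle.
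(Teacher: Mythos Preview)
Your proof is correct and follows essentially the same strategy as the paper: build a $G$-equivariant surjection $\rho_R^{\,n}\twoheadrightarrow M$ and then split it equivariantly using the averaging trick. The only cosmetic differences are that the paper constructs the equivariant surjection by first choosing a non-equivariant $\alpha:R^n\to P$, embedding $R^n\hookrightarrow\rho_R^{\,n}$ via the diagonal $x\mapsto\sum_g x\,e_g$, and then extending, whereas you write down the induced map $e_g^{(i)}\mapsto g\cdot m_i$ directly; and the paper invokes Lemma~\ref{equivariant splitting} for the splitting while you reproduce its averaging argument inline.
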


\begin{proof} 
Given an equivariant finitely generated projective $R$-module $P$ choose an epimorphism $\alpha: R^n\epi P$.  The induced surjective map 
\[ \widetilde{\alpha}:\rho_{_{R_\tau}}^n\epi P\ \ \ :\ \ \  (\sum_{g\in G}x_g\cdot e_g )\mapsto \sum_{g\in G}g\cdot \alpha(x_g)\]
is $G$-equivariant and we have an exact sequence of $G$-equivariant finitely generated projective $R$-modules 
$0\to \mathrm{ker}\mm \widetilde{\alpha}\xrightarrow{\ \ } \rho_{_{R_\tau}}^n \xrightarrow{ \ \widetilde{\alpha}\ } P\to 0.$
Choosing an equivariant splitting by Lemma \ref{equivariant splitting} we see that $P$ is a direct summand of $\rho_{{R_\tau}}^n$.
\end{proof}

Now we will define a version of equivariant algebraic $K$-theory presheaf based on affine schemes and in the paper we will exclusively work with this model of equivariant $K$-theory. Not to overburden our notations, we will denote this presheaf simply by $\mathcal{K}$ without any mention of the group $G$ in the notation.  
\begin{definition}[The presheaf $\mathcal{K}$] 
\label{AffineVersionEuivariantKTheory}
Let $\mathscr{S}_R^+\subset \mathscr{S}_R^{-1}\mathscr{S}_R$ be the full subcategory  of pairs $(A,B)$ where $A \subset \rho_R^\infty\oplus \rho_R^\infty=(\rho_R^\infty)^{2}$ and $B \subset  \rho_R^\infty\oplus \rho_R^\infty\oplus \rho_R^\infty=(\rho_R^\infty)^{ 3}$ 
	are $G$-invariant finitely generated projective submodules of $(\rho_R^\infty)^{2}$ and $(\rho_R^\infty)^{3}$, respectively.
	The category $\mathscr{S}_R^+$ is small, functorial in $R$, and by Lemma \ref{Equivalent Category of Summands in Regular Representations} it is equivalent to $\mathscr{S}_R^{-1}\mathscr{S}_R$. Let $\mathcal{K}\in M^G(S)$ be defined by taking the identification  \[\mathcal{K}(X)= \mathscr{N}(\mathscr{S}_R^+),\ \ \ \ \mathrm{where }\ R=\Gamma(X,\ \mathcal{O}_X) \]
for  $X\in Sm^G_S$. 
In this definition we have used $(\rho_R^\infty)^{2}$ and
$(\rho_R^\infty)^{3}$ so that later we can define maps between different presheaves considered. 
\begin{remark}
In subsection \ref{rem:SSasHocolim} we will give an equivalent homotopy colimit description of $K$-theory presheaf $\mc{K}$ with the same notation 
$\mathscr{S}^+$.
\end{remark} 
\end{definition}
For an {\it affine} $G$-scheme $X=\spec R$, the homotopy groups of $\mathcal{K}(X)$ at the base point $0$ recover the equivariant algebraic K-groups of $X$. 
\begin{lemma}\label{thm:affine_Ktheory}
	Let $X=\spec R$ be an affine $G$-scheme such that the order of the group $G$ is invertible in $R$. Then, there is a zigzag of weak equivalences between $(\mc{K}(X),0)$ and $(\mc{K}^G(X),0)$. In particular\[K_i(G,\mm R)=\pi_i(\mathcal{K}(\spec R), 0),\ \ \ \ i\geq 0.\]
\end{lemma}

\begin{proof}
   By Lemma \ref{equivariant splitting} we have that $VB(G,X)$ is split exact and hence the simplicial set $\mathscr{N}(\mathscr{S}_R^{-1}\mathscr{S}_R)$ has the homotopy type of the  loop space of $\mathscr{N}Q(VB(G,\mm X))$  as proved by Quillen \cite{Q73}. By Lemma \ref{Equivalent Category of Summands in Regular Representations}, the inclusion $\mathscr{S}_R^+\subset \mathscr{S}_R^{-1}\mathscr{S}_R$ is an equivalence of symmetric  monoidal categories. Putting these together we get the desired result. 
	\end{proof} 
\begin{theorem} 
\label{ThomasonEuivariantKTheory}
  	The $G$-equivariant motivic spaces $\mc{K}$ and $\mc{K}^G$ are isomorphic as objects in $\mc{H}^G(S)$. 
\end{theorem}
\begin{proof}
	By Theorem \ref{Points} any $G$-equivariant motivic space $\mc{F}\in M^G(S)$ is weakly equivalent to the $G$-equivariant motivic space $X\mapsto \mc{F}(\Gamma(X,\mathcal{O}_X))$ in the $G$-equivariant Nisnevich local model structure. Choosing a functorial loop space construction, Lemma \ref{thm:affine_Ktheory} gives us a global weak equivalence of simplicial presheaves between $\mathcal{K}$  and the presheaf $X\mapsto \mc{K}^G(\Gamma(X,\mathcal{O}_X))$. Putting these two facts together we get a zigzag of weak equivalences between $\mathcal{K}$ and $\mc{K}^G$ in the $G$-equivariant Nishevich local model structure and hence they are isomorphic in $\mc{H}^G(S)$.
\end{proof}
 This theorem shows us that studying $\mc{K}$ suffices for our result. 

\begin{definition}[Reduced equivariant algebraic \texorpdfstring{$K$}{K}-theory]
\label{Reduced Equivariant KTheory}
	For a ring $R$ consider the full subcategory 
	$$\widetilde{\mathscr{S}_R^+} \subset \mathscr{S}_R^+$$ 
	of objects $(A,B) \in \mathscr{S}_R^+$ such that 
	$A \subset \rho_R^\infty =0\oplus \rho_R^\infty \subset (\rho_R^\infty)^{2}$, 
	$B \subset A \oplus \rho_R^\infty \subset 0 \oplus (\rho_R^\infty)^2 \subset (\rho_R^\infty)^3$ and
	$A,B$ have the same constant rank.
	The reduced equivariant algebraic $K$-theory presheaf $\widetilde{\mc{K}}$ is defined by 
	\[\widetilde{\mathcal{K}}(X)= \mathscr{N}(\widetilde{\mathscr{S}_R^+}),\ \ \ \ \mathrm{where }\ X\in Sm^G_S\ \mathrm{and}\ R=\Gamma(X,\ \mathcal{O}_X). \]
	For an {\it affine} smooth $G$-scheme $X=\spec R$, its reduced equivariant algebraic $K$-groups $\widetilde{K}_i(G,\mm R)$ are the homotopy groups of
	$\widetilde{\mathcal{K}}(X)$ with respect to the base point $0$. Thus, 
	\[\widetilde{K}_i(G,\mm R)=\pi_i(\widetilde{\mathcal{K}}(\spec R), 0)\ \ \ \ i\geq 0.\]
\end{definition}
The equivalence class of any $(A,B)\in \widetilde{\mathscr{S}_R^+}$ will correspond to the element $[A]-[B]\in K_0(G,R)$.
\subsection{Reduced and unreduced equivariant algebraic \texorpdfstring{$K$}-theories} 
Consider the monoid $\mathbb{N}$ of non-negative natural numbers as a discrete symmetric monoidal category with respect to addition.
The ring $R$ is a $G$-invariant summand of $\rho_R$ via 
\[ R\subset \rho_R,\ \ \  x\mapsto\sum_{g\in G}x\cdot e_g ,\ \ \ x\in R, \]
and, it defines a $G$-invariant submodule $R^n\subset \rho^n_R.$ There is a map of presheaves from  the categorical group completion of $\mathbb{N}$ to $\mc{K}$:
\[ \mathbb{N}^{-1}\mathbb{N} \longrightarrow \mc{K}: (n,\mm m)\mapsto (R^n,\mm R^m) \]
where the factor $R^n$ embeds in $\rho^n_R\oplus 0\subset (\rho^\infty_R)^2$ and the factor $R^m$ embeds in $\rho^m_R\oplus 0\oplus 0\subset (\rho^\infty_R)^3.$
The full subcategory $\widetilde{\mathscr{S}_R^+} \subset \mathscr{S}_R^+$
defines a map of presheaves
\begin{equation}
\label{ReducedNonreduced}
\mathbb{N}^{-1}\mathbb{N}\times \widetilde{\mc{K}}\longrightarrow \mc{K}:( (n,\mm m),(A,\mm B))\mapsto (R^n\oplus A,\mm R^m\oplus B). 
\end{equation}
For equivariant schemes the following notion of $G$-connectedness seems more appropriate. 
\begin{definition}[$G$-connected schemes]
\label{G-connected} Given a $G$-scheme $X$, we will call it $G$-connected if the group acts transitively on its connected components.  
\end{definition}
 Every $G$-connected scheme has a Nisnevich cover by $G$-connected affine schemes; and, every $G$-vector  bundle on a $G$-connected scheme has constant rank. These allow us to relate reduced and non-reduced equivariant $K$-theories as follows.
\begin{lemma}\label{lem:reducedK}
The above map in $\mathrm{(\ref{ReducedNonreduced})}$ is an isomorphism in the motivic homotopy category 
\[\mathbb{Z}\times \widetilde{\mc{K}}\xrightarrow{\ \sim\ } \mc{K}.\]
\end{lemma}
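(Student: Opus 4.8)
The plan is to prove that \eqref{ReducedNonreduced} is a stalkwise weak equivalence of simplicial presheaves; by Theorem \ref{Points} (in the form that recognises Nisnevich-local simplicial weak equivalences on the points of the site, cf. the Remark following it) this makes it an equivariant Nisnevich, hence a motivic, weak equivalence, i.e. an isomorphism in $\mc{H}^G(S)$. A sectionwise weak equivalence would be too much to hope for --- on a disjoint union of two $G$-fixed closed points the map already fails to be a $\pi_0$-isomorphism --- so passing to the points of the site is essential. As a preliminary, observe that $\mathbb{N}^{-1}\mathbb{N}$ is a thin category whose connected components, indexed by $n-m\in\mathbb{Z}$, are each the filtered poset $(\mathbb{N},\le)$; hence $\mathscr{N}(\mathbb{N}^{-1}\mathbb{N})$ is a disjoint union of contractible spaces, homotopy equivalent to the discrete set $\mathbb{Z}$, and since $\mathscr{N}(-)$ preserves products the source of \eqref{ReducedNonreduced} is $\mathbb{Z}\times\widetilde{\mc{K}}$.

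Fix a semilocal affine $G$-scheme $\spec R$ with a single closed orbit. Since every connected component of $\spec R$ meets the closed points, which form one $G$-orbit, $G$ permutes the components of $\spec R$ transitively; as the isomorphism class of an equivariant vector bundle is $G$-invariant, every object of $\mathscr{S}_R$ has a well-defined constant rank. This produces a symmetric monoidal functor $\mathrm{rank}\colon\mathscr{S}_R\to\mathbb{N}$ and hence a functor $p\colon\mathscr{S}_R^+\to\mathbb{N}^{-1}\mathbb{N}$, $(A,B)\mapsto(\mathrm{rank}\,A,\mathrm{rank}\,B)$, which is split by $(n,m)\mapsto(R^n,R^m)$ (with the embeddings fixed in \ref{AffineVersionEuivariantKTheory}) because $\mathrm{rank}\,R=1$. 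Since $p$ and all morphisms of $\mathscr{S}_R^+$ preserve the integer $\mathrm{rank}\,A-\mathrm{rank}\,B$, the nerve $\mathscr{N}(\mathscr{S}_R^+)$ is the disjoint union over $d\in\mathbb{Z}$ of the nerves $\mathscr{N}(\mathscr{S}_R^+)_d$ of the full subcategories on pairs with $\mathrm{rank}\,A-\mathrm{rank}\,B=d$. Moreover the inclusion $\widetilde{\mathscr{S}_R^+}\hookrightarrow\mathscr{S}_R^+$ is full with image inside the $d=0$ subcategory, and it is an equivalence onto it: essential surjectivity follows exactly as in the identification $\mathscr{S}_R^+\simeq\mathscr{S}_R^{-1}\mathscr{S}_R$, using Lemma \ref{Equivalent Category of Summands in Regular Representations} to replace any rank-balanced $(A,B)$ by an equivariantly isomorphic pair in the prescribed position $A\subset\rho_R^\infty$, $B\subset A\oplus\rho_R^\infty$. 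Thus $\mathscr{N}(\widetilde{\mathscr{S}_R^+})\iso\mathscr{N}(\mathscr{S}_R^+)_0$.

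It remains to assemble these facts. The map \eqref{ReducedNonreduced} sends $((n,m),(A,B))$ to $(R^n,R^m)\oplus(A,B)$, so --- again because $\mathrm{rank}\,A=\mathrm{rank}\,B$ on $\widetilde{\mathscr{S}_R^+}$ --- it carries the component of the source lying over $d\in\mathbb{Z}$ into $\mathscr{N}(\mathscr{S}_R^+)_d$. That component of the source is the product of a contractible filtered poset with $\mathscr{N}(\widetilde{\mathscr{S}_R^+})$, and under the resulting identification the map becomes the composite of the equivalence $\mathscr{N}(\widetilde{\mathscr{S}_R^+})\iso\mathscr{N}(\mathscr{S}_R^+)_0$ with translation by (a point representing) the class $d\cdot[R]\in K_0(G,R)$; since $\mathscr{N}(\mathscr{S}_R^+)$ is a grouplike $H$-space --- the $\oplus$-structure, with $\pi_0=K_0(G,R)$ a group --- translation by any element is a homotopy equivalence. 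Hence \eqref{ReducedNonreduced} is a weak equivalence on each term of matching disjoint-union decompositions of its source and target, so it is a weak equivalence, and the lemma follows. The step I expect to be the main obstacle is the middle paragraph: checking that rank is genuinely well-defined (which is precisely why the single-closed-orbit semilocal points are the right objects to test on) and that the concrete small model $\widetilde{\mathscr{S}_R^+}$ really does realise the abstract category of rank-balanced pairs --- both are cofinality arguments of the same type already invoked for $\mathscr{S}_R^+\simeq\mathscr{S}_R^{-1}\mathscr{S}_R$, after which the remaining homotopy theory is the formal matter of splitting a free $\mathbb{Z}$ off the $K$-theory space along the rank map and its section $[R]$.
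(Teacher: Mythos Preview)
Your proof is correct; the paper itself does not argue the lemma but only points to \cite[Lemma~4.4]{ST15}, and your rank-decomposition together with the grouplike $H$-space translation argument, tested on the semilocal single-closed-orbit points via Theorem~\ref{Points}, is precisely the expected equivariant adaptation of that reference. Your observation that a sectionwise weak equivalence genuinely fails (so that passing to points is not optional) is a useful sharpening of the statement.
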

\begin{proof}
Similar arguments as in  \cite[Lemma 4.4]{ST15} apply here as well.
\end{proof}

\subsection{Grassmannians indexed over representations}

\label{RegularRepresentationsCategoricalVersion} 
In Definition \ref{RegRepresentation/Scheme} we have introduced $\rho^n$ and $\rho^\infty$. 
For a scheme $X\in Sm^G_S$,
with $R=\Gamma(X,\mm\mc{O}_X)$ the ring of global sections, we have regular representations $\rho^n_R$ and inclusions $\rho^n_R \subset \rho^{n+1}_R,
 v\mapsto (v,0)$. The $G$-module $\rho^\infty_R$ is the increasing union of $G$-equivariant locally free $R$-modules $\rho^{n}_R
\subset\rho^{n+1}_R\subset\cdots,\ n\geq 0$.
The set of representations $\rho^n_R$ with inclusions $\rho^n_R \subset \rho^{n+1}_R, v\mapsto (v,0)$ is isomorphic to the ordered set $\mathbb{N}$. For the base scheme $S$ we will use notations $\rho^n_S$ and $\rho^\infty_S$. When defining various presheaves later in this section indexed over representations, we will use the notation 
 $\varrho$ for emphasis instead of $\mathbb{N}$. 
 
 From now on in the paper we will use $\textsl{g}$ for the order $|G|$ of the group $G$ to keep the presentation less cumbersome.

\begin{definition}[Grassmannians of $G$-subbundles of regular representations] 
\label{GrassmannianSchemePresheaf}
\label{InfiniteGrassmannian}
Given integers $m\geq 0, \ n\geq 1,$ we have the scheme $Gr(m,\mm \rho_S^{n})$ parametrizing the $G$-equivariant submodules of $\rho_S^{n}$ which are locally free of constant rank $m$ and whose quotients are locally free. We call such submodules $G$-subbundles. The underlying scheme of $Gr(m,\mm \rho_S^{n})$ is the usual Grassmannian $Gr(m,\mm n\cdot \textsl{g})$ over $S$. This scheme has a natural $G$-action induced by the $G$-action on $\rho^{n}_S$, and for any $G$-scheme $X$, a morphism $X\to Gr(m,\mm \rho_S^{n})$ is $G$-equivariant if and only if the corresponding submodule is $G$-invariant. The embeddings $\rho_S^n\hookrightarrow \rho_S^{n+1},\ v\mapsto (v,0)$, define a map of Grassmannians $Gr(m,\mm \rho_S^{n})\hookrightarrow Gr(m,\mm \rho_S^{n+1})$, and one version of $G$-equivariant infinite Grassmannians is the ind-scheme
\begin{equation}
\label{infinite Grassmannian (m, infinity)}
Gr(m,\mm \infty)=Gr(m,\mm \rho_S^\infty)=\colim_{n\geq 1}\ [Gr(m,\mm \rho_S^{n})\hookrightarrow Gr(m,\mm \rho_S^{n+1})\hookrightarrow\cdots]. 
\end{equation}
We will view these as presheaves via Yoneda embedding. 
The embeddings $\rho_R^n\hookrightarrow \rho_R^{n+1}=\rho_R\oplus \rho^n_R,\ v\mapsto (0,v)$, induce maps $Gr(m,\mm \rho_S^n)\hookrightarrow Gr(m+\textsl{g},\mm \rho_S\oplus \rho_S^{n+1})$ $:E\mapsto \rho_R\oplus E$ of finite Grassmannians. In the colimit as in (\ref{infinite Grassmannian (m, infinity)}), these induce maps of inifinite Grassmannians $Gr(m,\mm \rho_S^\infty)\hookrightarrow Gr(m+\textsl{g},\mm \rho_S\oplus \rho_S^\infty)$. Another version of infinite Grassmannians is the colimit presheaf
\begin{equation}
\label{Gr_G definition}
Gr_G=\colim\ [Gr(m\textsl{g},\mm \rho_S^\infty)\hookrightarrow Gr(m\textsl{g}+\textsl{g},\mm \rho_S\oplus \rho_S^\infty)\hookrightarrow Gr(m\textsl{g}+2\textsl{g},\mm \rho^2_S\oplus \rho_S^\infty)\hookrightarrow\cdots].
\end{equation}
\end{definition}
 
\begin{remark}
\label{Gr(F)} 
By taking embeddings $\rho^n_S\oplus \rho_S^k\hookrightarrow \rho^n_S\oplus \rho_S^{k+1}\hookrightarrow \rho^n_S\oplus \rho_S^{k+2}\cdots$, we can define 
$Gr(d,\ \rho^n_S\oplus \rho^\infty_S)$ as in (\ref{infinite Grassmannian (m, infinity)}). 
The colimit 
\begin{equation}
\label{pointed Gr_G definition}
\colim\ [Gr(0,\mm \rho_S^\infty)\hookrightarrow Gr(\textsl{g},\mm \rho_S\oplus \rho_S^\infty)\hookrightarrow Gr(2\textsl{g},\mm \rho^2_S\oplus \rho_S^\infty)\hookrightarrow\cdots].
\end{equation}
obtained by taking $m=0$ in (\ref{Gr_G definition}) is isomorphic to $Gr_G$ and as a motivic space this colimit can be pointed by the subspace $0\in Gr(0,\rho^\infty_S)$. This will be a version of infinite Grassmannian as a pointed motivic space.
\end{remark}

\subsection{Grassmannian as equivariant \texorpdfstring{$K$}{K}-theory in motivic homotopy category} 
We will now define presheaves $\mc{S}(\rho^n\oplus\rho^\infty)$ and $\mc{S}(d,\ \rho^n\oplus\rho^\infty), \ d\geq 0$, of categories on $Sm^G_S$. These provide a  version of group completion for equivariant algebraic $K$-theory
and enable us to see $Gr_G$ as a geometric model of equivariant algebraic $K$-theory. 

\begin{definition}[Presheaves of categories $\mc{S}(\rho^n\oplus\rho^\infty)$ and $\mc{S}(d,\ \rho^n\oplus\rho^\infty)$]
\label{CategoricalGroupCompletionPresheaves} 
\label{S(F)}
Let $X$ be an equivariant scheme in $Sm^G_S$ with $R=\Gamma (X,\mm \mc{O}_X)$. 
Define the category $\mc{S}(\rho^n\oplus\rho^\infty)(X)$ as follows. Its objects
are $G$-equivariant locally free submodules $V\subset \rho^n_R\oplus\rho^\infty_R$ of constant finite rank whose quotients are locally free; and,
given objects $V, \ W$ in $\mc{S}(\rho^n\oplus\rho^\infty)(X)$,  morphisms $V\rightarrow W$ are $G$-equivariant isomorphisms of submodules $V$ and $W$, not required to be compatible with embeddings into $\rho^n_R\oplus\rho^\infty_R$.
Via pullback of sheaves of modules the assignment 
$X\mapsto \mc{S}(\rho^n\oplus\rho^\infty)(X)$ defines a presheaf of categories $\mc{S}(\rho^n\oplus\rho^\infty)$.
For a non-negative integer $d$, denote by $\mc{S}(d,\ \rho^n\oplus\rho^\infty)$, the subpresheaf of 
$\mc{S}(\rho^n\oplus\rho^\infty)$  defined by full subcategories consisting of submodules of constant rank $d$.	
\end{definition}

\begin{definition}[Group completion of presheaf of categories $\mc{S}^+$]
\label{mathcalS+}
We have the functor 
\begin{equation}
\label{S^+}
\mc{S}(-, \ - \oplus \rho^{\infty}_S):\varrho\rightarrow \mathrm{Cat}\ \ \ :\ \ \  \rho^n\mapsto \mc{S}(n\cdot \textsl{g}, \ \rho^n_S\oplus \rho^{\infty}_S)
\end{equation}
from $\varrho$ into the category $\mathrm{Cat}$ of small categories: Here the transition map  
for $\rho^n_S\subset \rho^{n+k}_S=\rho^n_S\oplus \rho^k_S\ , v\mapsto (v,0) $ 
is defined by $V\mapsto V\oplus \rho^k_S$ on objects and by $\alpha\mapsto \alpha \oplus 1_{\rho^k_S}$ on morphisms.
The group completion presheaf $\mc{S}^+$ is defined as the following colimit
$$\mc{S}^+ = \colim_{\rho^n_S\in  \varrho_S} \mc{S}(n\cdot \textsl{g},\ \rho^n_S\oplus \rho^{\infty}_S).$$
\end{definition}
The reason for calling these presheaves group completion are the weak equivalences in (\ref{eqn:SplusSplusiso}) and in the Theorem \ref{thm:simplHtpyEq}.

\subsection{Maps from Grassmannian into group completion presheaf \texorpdfstring{$\mc{S}^+$}{S+}.} 
The set of objects in $\mc{S}(\rho^n_S\oplus \rho^{\infty}_S)$ is given by the disjoint union $\coprod_{d\geq 0}\mc{S}(d,\ \rho^n_S\oplus \rho^{\infty}_S)$. The set of objects in $\mc{S}(n\cdot 
\textsl{g},\ \rho^n_S\oplus \rho^{\infty}_S)$ defines the representable presheaf $Gr(n\cdot \textsl{g}, \ \rho^n_S\oplus \rho^{\infty}_S)$. 
The presheaves of sets $Gr(n\cdot \textsl{g}, \ \rho^n_S\oplus \rho^{\infty}_S)$, thought of as presheaves of discrete categories, define the functor 
\begin{equation}
\label{Gr_G functor for colim and hocolim}
Gr(-, \ -\oplus \rho^{\infty}_S):\varrho\rightarrow \mathrm{Cat}\ \ \ :\ \ \  \rho^n\mapsto Gr(n\cdot \textsl{g}, \ \rho^n_S\oplus \rho^{\infty}_S)
\end{equation}
from the index category $\varrho$ to the category $\mathrm{Cat}$ of small categories. Then
$$Gr_G = \colim_{\rho^n_S\in  \varrho_S} Gr(n\cdot \textsl{g},\ \rho^n_S\oplus \rho^{\infty}_S).$$
The inclusion of objects in a category defines a natural transformation $Gr(-, \ -\oplus \rho^{\infty}_S)\rightarrow 
\mc{S}(-, \ - \oplus \rho^{\infty}_S)$, and we have the induced functor of colimit
categories, and hence a map of simplicial presheaves 
\begin{equation}
\label{mathcalGr2mathcalS+}
Gr_G \to  \mc{S}^+
\end{equation}
from the representable presheaf defined by infinite Grassmannians $Gr_G$ into the group completion
$ \mc{S}^+$; see Remark \ref{Gr(F)}.

\subsection{Equivariant algebraic \texorpdfstring{$K$}{K}-theory as a homotopy colimit}
\label{rem:SSasHocolim}
We will now explain that instead of colimit in Definition \ref{mathcalS+} if we take Thomason's Grothendieck construction or lax colimit, then we get the functorial version of reduced equivariant algebraic $K$-theory defined in \ref{Reduced Equivariant KTheory} via equivalence of categories in (\ref{eqn:reduced K-Theory as Grothendieck construction}). In \cite[Definition 7]{ST15}, this construction was called homotopy colimit, because homotopy colimit of the diagram of nerve simplicial sets indexed over $\varrho$ is weakly equivalent to the nerve of lax colimit, but we should mention that usage of term homotopy colimit here does conflict with the use of the term homotopy colimit in quasi-categories. We will still use the term homotopy colimit in this paper.
For the convenience of reader we recall this definition from \cite[Definition 7]{ST15}.

\begin{definition}[Homotopy colimits]
\label{dfn:hocolim}
Let $\mc{F}: \mathcal{C}\rightarrow \mathrm{Cat}$ be a functor from a category $\mathcal{C}$ to the category $\mathrm{Cat}$ of small
categories. 
The homotopy colimit 
$$\hocolim_{\mathcal{C}}\mc{F}$$
is the category whose objects are pairs
$(X,A)$ with $X$ an object of $\mathcal{C}$ and $A$ an object of $\mc{F}(X)$; and, a map from $(X,A)$ to $(Y,B)$ is a pair $(x,a)$, where $x:X \rightarrow Y$ is a map in
$\mathcal{C}$ and $a: F(x)A \rightarrow B$ is a map in $\mc{F}(Y)$.
Composition $(y,b)\circ (x,a)$ of $(x,a):(X,A) \rightarrow 
(Y,B)$ and $(y,b):(Y,B) \rightarrow (Z,C)$ is the map $(y\circ x, b\circ \mc{F}(y)a)$.
\end{definition}
Let us denote the homotopy colimit of the functor in (\ref{S^+})
by $\widetilde{\mathscr{S}^+}$, that is,
\begin{equation}
\label{hocolim S^+}\widetilde{\mathscr{S}^+}= \hocolim_{\varrho} \mc{S}(-,\ -\oplus \rho^{\infty}_S).
\end{equation}
For a scheme $X\in Sm^G_S$ with $ R=\Gamma(X,\ \mathcal{O}_X)$, 
the category $\widetilde{\mathscr{S}^+}(\spec R)$ has objects the pairs $(\rho^n_R, A)$, where $A\subset \rho^n_R\oplus\rho^{\infty}_R$ is
an equivariant submodule of rank $n\cdot \textsl{g}$. Consider the fully faithful functor 
$$\widetilde{\mathscr{S}^+}(\spec R) \rightarrow \widetilde{\mathscr{S}_R^+}\ \ : \ \ (\rho^n_R,A)\mapsto (\rho^n_R,A),$$
see Definition \ref{Reduced Equivariant KTheory}. It is essentially surjective
since an object $(A,B)$ in $\widetilde{\mathscr{S}_R^+}$ is isomorphic to an object of the form $(\rho^n_R,B\oplus A')$ where we choose an equivariant submodule $A'$ such that $A\oplus A'\simeq \rho^n_R$ for some positive integer $n$.
Thus, 
we have an equivalence of categories
\begin{equation}
\label{eqn:reduced K-Theory as Grothendieck construction}
\widetilde{\mathscr{S}_R^+} \simeq \widetilde{\mathscr{S}^+}(\spec R).
\end{equation}
This is a homotopy colimit description of reduced equivariant $K$-theory presheaf 
$$\widetilde{\mathcal{K}}(X)= \mathscr{N}(\widetilde{\mathscr{S}_R^+})=
\mathscr{N}(\hocolim_{\varrho} \mc{S}(-,\ -\oplus \rho^{\infty}_S)(\spec R)).$$
\begin{remark}
Taking homotopy colimit of the functor given by the presheaf of  categories $\mc{S}(\rho^n\oplus\rho^\infty)$ we get an equivalent description of the nonreduced $K$-theory $\mc{K}$ defined in \ref{AffineVersionEuivariantKTheory}.
\end{remark}

\subsection{Grassmannian and Group Completion as homotopy colimits}
	\label{dfn:GrRtoGW}

Since the indexing category $\varrho$ is directed, the natural map from homotopy colimit to colimit  
	\begin{equation}\label{eqn:SplusSplusiso}
	\widetilde{\mathscr{S}^+ }\stackrel{\sim}{\to} \mc{S}^+ \end{equation}
induces a weak equivalences of simplicial sets for every $G$-scheme, see \cite[Lemma 13]{ST15}. 
Given an affine scheme $\spec(R)$ equipped with a $G$-action,
let $\Delta R$ be the $G$-equivariant simplicial ring $n\mapsto \Delta^nR = R[T_0,...,T_n]/\langle T_0 + \cdots + T_n - 1\rangle$ where the $G$-action on $\Delta^nR$ corresponds to the trivial $G$-vector bundle action on $\spec(\Delta^nR)\cong \mathbb{A}^n_R$.

\begin{theorem}
\label{thm:simplHtpyEq}
Let $R$ be a commutative $G$-connected $G$-equivariant regular noetherian ring in which 
the order of the group $G$ is a unit. The maps in (\ref{mathcalGr2mathcalS+}) and (\ref{eqn:SplusSplusiso}) induce weak equivalences of simplicial sets
$$Gr_G(\Delta R) \stackrel{\sim}{\rightarrow}\mc{S}^+(\Delta R)\xleftarrow{\sim} 
\widetilde{\mathscr{S}^+}(\Delta R) = \widetilde{\mathcal{K}}(\Delta R).$$ 
Thus we have an isomorphism 
\[
Gr_G \xrightarrow{\ \sim\ } \widetilde{\mathcal{K}}
\]
in $\mathcal{H}^G_\bullet(S)$. 
\end{theorem}

\begin{proof}
To prove this theorem the general machinery developed in \cite[Sections 6 and 7]{ST15} for the proof of representability of Grothendieck-Witt theory by orthogonal Grassmannian can be applied. 
We wish to prove that the map $Gr_G(\Delta R)\rightarrow 
\mc{S}^+(\Delta R)$ is a weak equivalence of simplicial sets.  We have surjections of presheaves  
$$Gr(n\cdot \textsl{g},
	\mm \rho^n_S\oplus\rho^{\infty}_S)\rightarrow \widetilde{K}_0(G,\mm R)\ \ \mathrm{and} \ \ \mc{S}(n\cdot \textsl{g},
	\mm \rho^n_S\oplus\rho^{\infty}_S)\rightarrow \widetilde{K}_0(G,\mm R)$$
    defined by sending  any $A\subset \rho^n_S\oplus\rho^{\infty}_S$ to $[A]-[\rho_S^n]$. For any such element $[A]-n[\rho_S]\in \tilde{K}_0(G,R)$ let $Gr_{G,A}\subset Gr_G$ and $\mc{S}^+_{A}\subset \mc{S}^+$ denote the corresponding fibres with respect to the surjection. These are given, as in \ref{InfiniteGrassmannian} and \ref{mathcalS+}, via induced colimits
$$Gr_{G,A}=\colim_{\rho^n_S\in \varrho_S} Gr(A\oplus\rho^n_S,
	\mm \rho^{m+n}_S\oplus\rho^{\infty}_S) \ \ \ \ \ \ \ \mathrm{and}\ \ \ \ \ \ 
	\mc{S}^+_{A} =\colim_{\rho^n_S\in \varrho_S} \mc{S}(A\oplus\rho^n_S, \rho^{m+n}_S\oplus \rho^{\infty}_S)
$$
where $Gr(A\oplus\rho^n_S,
	\mm \rho^{m+n}_S\oplus\rho^{\infty}_S)$ and $\mc{S}(A\oplus\rho^n_S, \rho^{m+n}_S\oplus \rho^{\infty}_S)$ are defined by taking sub-objects in $\rho^{m+n}_S\oplus \rho^{\infty}_S$ isomorphic to $A\oplus \rho^m_S$. Note that while our construction depends on the choice of presentation for an element of $\tilde{K}_0(G,R)$, the colimits will be the same.
 Therefore we get cartesian diagrams of simplicial sets  
\begin{equation}
\begin{tikzcd}	
Gr_{G,A}(\Delta R) \arrow[r] \ar[d]& Gr_{G}(\Delta R)  \ar[d] 
& & &  \mc{S}^+_{A}(\Delta R)\ar[d] \ar[r] & \mc{S}^+(\Delta R) \ar[d]\\ 
pt\ar[r] &\widetilde{K}_0(G,\Delta R) & & & pt \ar[r] &\widetilde{K}_0(G, \Delta R)
\end{tikzcd}
\end{equation}
for each $A$.
In these diagrams the lower horizontal map is a fibration since for regular rings equivariant $K$-theory is homotopy invariant, and hence for connected regular rings we have $\widetilde{K}_0(G,\mm \Delta R)\iso \widetilde{K}_0(G,\mm R)$,  is a constant simplicial set. 
Thus, both these diagrams are homotopy cartesian diagrams of simplicial sets
with fibrations $Gr_{G}(\Delta R)\rightarrow \widetilde{K}_0(G,\Delta R)$ and $\mc{S}^+(\Delta R)\rightarrow \widetilde{K}_0(G,\Delta R)$.

We now have a family of diagrams 
\begin{equation}
\label{MainTheoremFibrationDiagram}
\begin{tikzcd}	
Gr_{G,A}(\Delta R) \arrow[r] \ar[d]& Gr_{G}(\Delta R) \arrow[r] \ar[d] 
& \widetilde{K}_0(G,\mm \Delta R)	\ar[d,"1"]\\
\mc{S}^+_{A}(\Delta R) \ar[r] & \mc{S}^+(\Delta R)  \ar[r] &
\widetilde{K}_0(G,\mm \Delta R)
\end{tikzcd}
\end{equation}
of homotopy fibrations of simplicial sets.  
In Proposition \ref{Connected Component Identification} below we show that the left vertical map in the above diagram is a weak equivalence of connected simplicial sets. 

Thus, in the above homotopy fibration diagram we have homotopy equivalence between the fibers at each base-point and we have bijections,
$\pi_0(\mc{S}^+(\Delta R))= \widetilde{K_0}(G,\mm R)\cong \pi_0(Gr_{G}(\Delta R))$. The induced long exact sequence of homotopy groups will prove that the map $Gr_G(\Delta R)\rightarrow \mathcal{S}^+(\Delta R)$ is a weak equivalence provided all the terms are abelian groups and the bijection $\pi_0(Gr_{G}(\Delta R))\cong \widetilde{K_0}(G,\mm R)$ is a group homomorphism.
For this one can follow the method from \cite[Section 7]{ST15} to show that the fibration diagram is a diagram of $E_\infty$-spaces. The $E_\infty$-operad $\mathscr{E}$ is defined as follows: The $n$-th space $\mathscr{E}(n)$ of the operad is the space of $G$-equivariant embeddings of $(\rho^\infty)^n$
into $\rho^\infty$. Thus, for a commutative ring $R$, 
$$\mathscr{E}(n)(R)=\lim_{\rho^k_R\in \rho^\infty_R}\mathrm{St}((\rho^k_R)^{\oplus n}, \rho^\infty _R)$$
where $\mathrm{St}((\rho^k_R)^{\oplus n}, \rho^\infty _R)$ denotes the set of 
$G$-equivariant embeddings; see (\ref{Contractibility of Stiefel presheaf}). The transition maps for $\rho^k\subset \rho^{k+l},\ \ v\mapsto (v,0)$ are given by restrictions $\mathrm{St}((\rho^{k+l}_R)^{\oplus n}, \rho^\infty _R)
\rightarrow \mathrm{St}((\rho^k_R)^{\oplus n}, \rho^\infty _R)$.
The rest of the details follow exactly as in \cite[Section 7]{ST15}.
\end{proof}	

\begin{proposition}
\label{Connected Component Identification}
The left vertical map $Gr_{G,A}(\Delta R)\rightarrow \mc{S}^+_{A}(\Delta R)$ in the diagram $\mathrm{(\ref{MainTheoremFibrationDiagram})}$
is a  weak equivalence of connected simplicial sets.
\end{proposition}
\begin{proof} 
For simplicity of notation we deal with the case of $A=0$ below the same argument works for arbitrary $A$ as by Lemma \ref{Contractibility of Stiefel presheaf}. In the next paragraph we construct contractible simplicial sets $\mathrm{St}(\rho^n,\ \rho^n\oplus \rho^{\infty})(\Delta R)$ and $B\mathcal{E}(\rho^n,\ \rho^n\oplus \rho^{\infty})(\Delta R)$ with free actions of the simplicial group $\mathrm{Aut}(\rho^n_{\Delta R})$, and an equivariant map such that the induced map on quotients is the map $Gr(\rho^n,
\mm \rho^n\oplus\rho^{\infty})(\Delta R)\rightarrow \mc{S}(\rho^n,
\mm \rho^n\oplus\rho^{\infty})(\Delta R) $ 
given by inclusion of zero simplicies. This implies these simplicial sets are connected and the map is a weak equivalence by \cite[Proposition 8]{ST15}.  
Being a directed colimit of these weak equivalence, the map $Gr_{G,[0]}(\Delta R)\rightarrow \mc{S}^+_{[0]}(\Delta R)$ in the proposition is a weak equivalence.

Let $\mathrm{St}(\rho^n,\ \rho^n\oplus \rho^{\infty})$ denote the presheaf of sets defined by taking $G$-equivariant embeddings $\rho^n\rightarrow \rho^n\oplus \rho^{\infty}$. The group $\mathrm{Aut}(\rho^n)$ of automorphisms of $\rho^n$ acts freely on the right on $\mathrm{St}(\rho^n,\ \rho^n\oplus \rho^{\infty})$ and the quotient
$\mathrm{St}(\rho^n,\ \rho^n\oplus \rho^{\infty})/\mathrm{Aut}(\rho^n)$ 
is isomorphic to $Gr(\rho^n,\rho^n\oplus \rho^{\infty})$.
Let
$\mathcal{E}(\rho^n,\ \rho^n\oplus \rho^{\infty})$ be the presheaf of categories having objects $\mathrm{St}(\rho^n,\ \rho^n\oplus \rho^{\infty})$; and, given embeddings $a,b:\rho^n\rightarrow \rho^n\oplus \rho^{\infty}$, a morphism  $a\rightarrow b$ is the unique isomorphism $\mathrm{Im}(a)\rightarrow \mathrm{Im}(b)$ such that the diagram 
\begin{center}
\begin{tikzcd}	
\rho^n \ar[r,"a"] \arrow[rd, "b"]& \mathrm{Im}(a) \ar[d] \\
\ & \mathrm{Im}(b)	
\end{tikzcd}
\end{center}
commutes.
The group $\mathrm{Aut}(\rho^n)$ of automorphisms of $\rho^n$ acts freely on the right on classifying space $B\mathcal{E}(\rho^n,\ \rho^n\oplus \rho^{\infty})$ and the the quotient 
$(B\mathcal{E}(\rho^n,\ \rho^n\oplus \rho^{\infty}))/\mathrm{Aut}(\rho^n)$
isomorphic to the classifying space $B\mc{S}(\rho^n,\rho^n\oplus \rho^{\infty})$, here by classifying space of a category we mean its nerve. 
By taking quotients, the map given by inclusion of objects $\mathrm{St}(\rho^n,\ \rho^n\oplus \rho^{\infty})(\Delta R)\rightarrow \mathcal{E}(\rho^n,\ \rho^n\oplus \rho^{\infty})(\Delta R)$ 
induces the map $Gr(\rho^n,
\mm \rho^n\oplus\rho^{\infty})(\Delta R)\rightarrow \mc{S}(\rho^n,
\mm \rho^n\oplus\rho^{\infty})(\Delta R)$ as in (\ref{mathcalGr2mathcalS+}).

The simplicial set $\mathrm{St}(\rho^n,\ \rho^n\oplus \rho^{\infty})(\Delta R)$ is contractible as follows. 
The group $\mathrm{Aut}((\rho^n\oplus\rho^\infty)_{\Delta R})$ acts transitively on the left on $\mathrm{St}(\rho^n,\ \rho^n\oplus \rho^{\infty})(\Delta R)$. For this action the stabiliser of the embedding $a:\rho^n_{\Delta R}\rightarrow (\rho^n\oplus \rho^{\infty})_{\Delta R},\ v\mapsto (v,0)$ is the group $\mathrm{Aut}(\rho^\infty_{\Delta R})$, and hence $\mathrm{St}(\rho^n,\ \rho^n\oplus \rho^{\infty})(\Delta R)$ is isomorphic to $\mathrm{Aut}(\rho^\infty_{\Delta R})\backslash \mathrm{Aut}((\rho^n\oplus\rho^\infty)_{\Delta R})$. Now the contractibility of 
$\mathrm{St}(\rho^n,\ \rho^n\oplus \rho^{\infty})(\Delta R)$ follows from the homotopy equivalence in 
Lemma \ref{Contractibility of Stiefel presheaf} applied to $\rho^n$ 
by the analogous  version of \cite[Proposition 2]{ST15}.
The presheaf $\mathcal{E}(\rho^n,\ \rho^n\oplus \rho^{\infty})$  is also contractible since in this category every object is initial.  Thus, 
the map $Gr(\rho^n,
\mm \rho^n\oplus\rho^{\infty})(\Delta R)\rightarrow \mc{S}(\rho^n,
\mm \rho^n\oplus\rho^{\infty})(\Delta R) $ 
is a weak equivalence by \cite[Proposition 8]{ST15}, and the induced map of this proposition $Gr_{G,[0]}(\Delta R)\rightarrow \mc{S}^+_{[0]}(\Delta R)$ on directed colimits is a weak equivalence.     
\end{proof}
\begin{lemma} 
\label{Contractibility of Stiefel presheaf}
Given a $G$-vector  bundle $V$ on an equivariant affine scheme $\spec R$, the inclusion
$\rho^\infty_{\Delta R}\subset V\oplus\rho^\infty_{\Delta R}$ induces a homotopy equivalence of simplicial groups
$$\mathrm{Aut}(\rho^\infty_{\Delta R})\subset 
\mathrm{Aut}(V\oplus\rho^\infty _{\Delta R})\ \ \ : \ \ \ \sigma\mapsto 1_V\oplus \sigma.$$
\end{lemma}
\begin{proof} The proof follows exactly as in \cite[Lemma 6]{ST15} with the choice of $g=h\oplus h^{-1}\in \mathrm{Aut}(\rho^{2n+2}_{\Delta R})$ where $h\in \mathrm{Aut}(\rho^{n+1}_{\Delta R})$ is the isometry given by the matrix
$\left( \begin{smallmatrix} 0 & 1 \\ 1_{n} & 0\end{smallmatrix}\right)$.  
We use the matrix identity
$$
\left(\begin{smallmatrix}h&0\\ 0&h^{-1}\end{smallmatrix}\right)=
\left(\begin{smallmatrix}1&h\\ 0&1\end{smallmatrix}\right)
\left(\begin{smallmatrix}1&0\\ -h^{-1}&1\end{smallmatrix}\right)
\left(\begin{smallmatrix}1&h\\ 0&1\end{smallmatrix}\right)
\left(\begin{smallmatrix}1&-1\\ 0&1\end{smallmatrix}\right)
\left(\begin{smallmatrix}1&0\\ 1&1\end{smallmatrix}\right)
\left(\begin{smallmatrix}1&-1\\ 0&1\end{smallmatrix}\right),
$$
and $\mathbb{A}^1$-contractibility of elementary matrices.
\end{proof}

Putting together Theorem \ref{thm:simplHtpyEq}  and Lemma \ref{lem:reducedK} we get the main theorem.
\begin{theorem}
	\label{thm:MT} 
	Let $S$ be a regular noetherian scheme of finite Krull dimension such that the order of the group is a unit in $\Gamma(S,\ \mc{O}_S)$. Let $X\in Sm^G_S$ be a $G$-equivariant smooth scheme over $S$. Then,  there are isomorphisms
	\[K_n(G, X) \simeq [ X_{+} \wedge S^n, \ZZ\times Gr_G]_{\mc{H}_\bullet^G(S)} \] for all $n$.
\end{theorem}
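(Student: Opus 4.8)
The plan is to deduce Theorem \ref{thm:MT} from Theorem \ref{thm:simplHtpyEq} by the standard motivic-homotopy mechanism that identifies a homotopy-invariant, Nisnevich-local presheaf of spectra (or spaces) with its $\mathbb{A}^1$-localization, exactly as is done classically for algebraic $K$-theory à la Morel--Voevodsky. First I would observe that $\mathbb{Z}\times Gr_G$, being built from representable presheaves of Grassmannians, is a fibrant--cofibrant model-friendly object, and by the Lemma after \eqref{ReducedNonreduced} together with Remark \ref{ThomasonEuivariantKTheory} there are global weak equivalences $\mathbb{Z}\times\widetilde{\mathcal{K}}\xrightarrow{\sim}\mathcal{K}\xrightarrow{\sim}\mathcal{K}^G$ on affine schemes, and hence on all of $Sm^G_S$ after Nisnevich localization by Theorem \ref{Points}. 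So it suffices to produce, for smooth $G$-schemes $X$, a natural isomorphism
\[
\pi_n\,\mathcal{K}^G(X)\ \simeq\ [\,X_+\wedge S^n,\ \mathbb{Z}\times Gr_G\,]_{\mc{H}^G_\bullet(S)}.
\]

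The key steps, in order, are as follows. (1) Using the maps \eqref{eqn:mathscrGr2mathscrS+} and \eqref{eqn:mthscrS+2mathcalK} and the filtered-colimit equivalences \eqref{eqn:mathscrGr2MathcalGrMathscrS+2mathcalS+}, identify the presheaf $\mathbb{Z}\times Gr_G$ with $\mathcal{K}$ up to the maps $\mathbb{Z}\times\mathscr{G}r\to\mathbb{Z}\times\mathscr{S}^+\to\mathbb{Z}\times\widetilde{\mathcal{K}}\xrightarrow{\sim}\mathcal{K}$; these are not schemewise equivalences, only motivic ones, so one must work in $\mc{H}^G_\bullet(S)$. (2) Evaluate everything on the cosimplicial ring $\Delta R$: Theorem \ref{thm:simplHtpyEq} gives, for connected regular noetherian $R$ with $|G|$ invertible, weak equivalences $\mathscr{G}r(\Delta R)\xrightarrow{\sim}\mathscr{S}^+(\Delta R)\xrightarrow{\sim}\widetilde{\mathcal{K}}(\Delta R)$. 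By Theorem \ref{Points} the sites $[Sm^G_S]_{\mathrm{Nis}}$ have enough points given by semilocal affine $G$-schemes $\spec R$ with one closed orbit, and such $R$ are connected regular noetherian; hence the singular (Suslin--Voevodsky) construction $X\mapsto |\,\mathbb{Z}\times Gr_G(\Delta^\bullet_X)\,|$ and the corresponding construction for $\mathcal{K}$ agree stalkwise, giving a Nisnevich-local weak equivalence $\mathrm{Sing}_{\mathbb{A}^1}(\mathbb{Z}\times Gr_G)\xrightarrow{\sim}\mathrm{Sing}_{\mathbb{A}^1}(\mathcal{K})$. (3) Show both sides are already $\mathbb{A}^1$-local after $\mathrm{Sing}_{\mathbb{A}^1}$: $\mathrm{Sing}_{\mathbb{A}^1}$ of any presheaf is $\mathbb{A}^1$-invariant, and after fibrant replacement in $M^G_{\mathrm{Nis}}(S)$ it is $\mathbb{A}^1$-local, so $\mathrm{Sing}_{\mathbb{A}^1}(\mathbb{Z}\times Gr_G)$ is a model for $\mathbb{Z}\times Gr_G$ in $\mc{H}^G(S)$, and similarly $\mathrm{Sing}_{\mathbb{A}^1}(\mathcal{K}^G)$ computes equivariant $K$-theory because $\mathcal{K}^G$ is already homotopy invariant on regular $G$-schemes (equivariant homotopy invariance for regular schemes, via Thomason), so no correction is lost. (4) Finally translate $\pi_n$ into the hom-set in $\mc{H}^G_\bullet(S)$: since $\mathbb{Z}\times Gr_G$ is (after fibrant replacement) a simplicial-fibrant, $\mathbb{A}^1$-local, Nisnevich-local object, $[\,X_+\wedge S^n,\ \mathbb{Z}\times Gr_G\,]_{\mc{H}^G_\bullet(S)}=\pi_n\,\mathrm{Map}(X_+,\ \mathbb{Z}\times Gr_G)=\pi_n\big((\mathbb{Z}\times Gr_G)^{\mathrm{fib}}(X)\big)=\pi_n\,\mathcal{K}^G(X)=K_n(G,X)$.

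The main obstacle I expect is step (3) and its interaction with (2): one must be careful that the stalkwise equivalence of Theorem \ref{thm:simplHtpyEq} upgrades to a \emph{motivic} equivalence of the presheaves $\mathbb{Z}\times Gr_G$ and $\mathcal{K}$, which requires knowing that the $\mathbb{A}^1$-localization functor commutes with the relevant colimits (so that evaluating on $\Delta R$ really does compute the motivic-local value at the stalks) and that equivariant algebraic $K$-theory satisfies equivariant Nisnevich descent and equivariant $\mathbb{A}^1$-invariance on $Sm^G_S$ — the latter is where the hypothesis that $|G|$ is a unit and that $S$ is regular is essential, via Lemma \ref{equivariant splitting} and Thomason's equivariant analogue of the homotopy property. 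Once these descent and invariance properties are in hand, identifying $\mathrm{Sing}_{\mathbb{A}^1}$ of both presheaves with their derived global sections is formal, and the remaining bookkeeping — matching the pointing by $0$, the $\mathbb{Z}$-factor coming from $\mathbb{N}^{-1}\mathbb{N}$ with $K_0$, and desuspending $S^n$ — is routine and can be organized exactly as in the non-equivariant case treated in \cite{ST15}.
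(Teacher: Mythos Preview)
Your proposal is correct and follows essentially the same approach as the paper's proof: reduce to a Nisnevich-local statement via Theorem \ref{Points}, invoke Theorem \ref{thm:simplHtpyEq} on the stalk rings $\Delta R$, and use that Thomason's $\mathcal{K}^G$ is already $\mathbb{A}^1$-invariant and satisfies equivariant Nisnevich descent on regular schemes so that it is its own motivic-local model. The paper's own proof is much terser---it simply records these three ingredients in three sentences---whereas you have unpacked the $\mathrm{Sing}_{\mathbb{A}^1}$ mechanism and the fibrant-replacement bookkeeping explicitly; but the underlying argument is the same.
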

Here we have written out the conditions on the scheme $S$ for clarity.
\section{Operations in equivariant algebraic \texorpdfstring{$K$}{K}-theory}

In this and following sections the base scheme $S$ will be assumed to be regular and $G\to S$ will be a constant finite group scheme whose order $|G|=\textsl{g}$ is a unit in $\Gamma(S,\mathcal{O}_S)$. When the required results are true for a broader class of schemes it will be explicitly mentioned. For each $k\geq 0$, let $\mathbf{R}\Omega^k:\mc{H}_\bullet^G(S)\to \mc{H}_\bullet^G(S)$ be the derived $k^{th}$ simplicial loop space functor. The rest of this paper will be dedicated to prove the following theorem.
	\begin{theorem}\label{thm:main}
		 There is a canonical bijection
		\[ [(\ZZ\times Gr_G)^n,\mathbf{R}\Omega^k \ZZ\times Gr_G]_{\mc{H}^G(S)}\iso Hom_{M^G(S)}(K_0(G,-)^n,K_k(G,-))  \]
		for each $n,k\in\mathbb{N}$. In particular we get,
		\[ [\ZZ\times Gr_G,\ZZ\times Gr_G]_{\mc{H}^G(S)}\iso End_{M^G(S)}(K_0(G,-)) \]
		when $n=1$ and $k=0$.
	\end{theorem}

Here $\ZZ\times Gr_G$ is pointed at $(0,0)$ (see Remark \ref{Gr(F)}). First note that, as in the non-equivariant case, there is a functor $\mc{H}^G(S)\to M^G(S)$ given by $E\mapsto [-,E]_{\mc{H}^G(S)}$ 
	where $[-,E]_{\mc{H}^G(S)}$ is well defined as an object in $M^G(S)$. Let us denote this functor by $\underline{\pi}_0$. We then have a map of sets
	\begin{equation}\label{eq:alpha}
	\alpha_{F,E}: [F,E]_{\mc{H}^G(S)}\to Hom_{M^G(S)}(\underline{\pi}_0F,\underline{\pi}_0E)
	\end{equation}
	for any pair of objects $E,F\in \mc{H}^G(S)$. We have $\underline{\pi}_0\mathbf{R}\Omega^k K(G,-)\cong K_k(G,-)$ in $M^G(S)$. As $\ZZ\times Gr_G$ represents equivariant algebraic $K$-theory in $\mc{H}^G(S)$, the map \ref{eq:alpha} becomes
	\[ [(\ZZ\times Gr_G)^n,\mathbf{R}\Omega^k\ZZ\times Gr_G]_{\mc{H}^G(S)}\xrightarrow{\alpha} Hom_{M^G(S)}(K_0(G,-)^n,K_k(G,-))\]
	when applied to the pair $((\ZZ\times Gr_G)^n,\mathbf{R}\Omega^k\ZZ\times Gr_G)$. To prove Theorem~\ref{thm:main} we just need to show that this map is a bijection. 
	We will do this by proving some general results below along the lines of \cite{R10}. First we have the equivariant analogue of \cite[Lemma~1.2.1]{R10}. 
	\begin{lemma}\label{lem:lim1}
		 Let $E$ be a group object in $\mc{H}_\bullet^G(S)$. Let $(X_i)_{i\in I}$ be a directed system of objects in $Sm^G_S$ ordered by the directed set $I$ which contains $\mathbb{N}$ as a cofinal subset. Let $X=\colim_{i\in I} X_i$ be the colimit as objects in $M^G(S)$. Then, there exists a short exact sequence of groups,
		\[0\to {\lim_{i\in I}}^1 [X_i,\mathbf{R}\Omega^1 E]_{\mc{H}^G(S)}\to [\colim_{i\in I} X_i, E]_{\mc{H}^G(S)}\to \lim_{i\in I} [X_i, E]_{\mc{H}^G(S)}\to 0 .\]
		In particular, there is an isomorphism of groups
		\[ [\colim_{i\in I} X_i, E]_{\mc{H}^G(S)}\iso \lim_{i\in I} [X_i, E]_{\mc{H}^G(S)} \]
		whenever $([X_i,\mathbf{R}\Omega^1E]_{\mc{H}^G(S)})_{i\in I}$ is a Mittag-Leffler system.
	\end{lemma}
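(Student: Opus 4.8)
The plan is to reduce the statement to a standard $\lim^1$-sequence for a homotopy limit of spectra (or simplicial sets), which is precisely how the non-equivariant statement \cite[Lem.~1.2.1]{R10} is proved, and then check that each ingredient transports to the equivariant motivic setting. The key point is that $\colim_{i\in I}X_i$, formed in $M^G(S)$, is a filtered colimit of cofibrant objects, so it is already a homotopy colimit; since $I$ contains $\mathbb{N}$ as a cofinal subset, this homotopy colimit is computed by the mapping telescope over $\mathbb{N}$. First I would replace $E$ by a fibrant replacement $E'$ in $M^G_{\mathrm{mot}}(S)$ (so that $[-,E]_{\mc{H}^G(S)}$ is represented by the mapping-space presheaf $\underline{\mathrm{Map}}(-,E')$), and observe that $\mathbf{R}\Omega^1 E$ is then modeled by the honest loop space $\Omega E'$; because $E$ is assumed to be a group object in $\mc{H}^G(S)$, the pointed mapping spaces $\underline{\mathrm{Map}}_\bullet(X_i,E')$ are group-like $H$-spaces, so $\pi_0$ of them is a group and the $\lim^1$ term makes sense as a group.

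Next I would invoke the standard Milnor exact sequence: for a tower $\cdots\to Z_{n+1}\to Z_n\to\cdots$ of fibrant pointed simplicial sets (or spaces) and its homotopy limit $Z = \holim_n Z_n$, there is a short exact sequence
\[
0 \to {\lim_n}^1\,\pi_1(Z_n) \to \pi_0(Z) \to \lim_n \pi_0(Z_n) \to 0 .
\]
Applying this with $Z_n = \underline{\mathrm{Map}}_\bullet(X_n, E')$ and using that the mapping telescope $X = \operatorname{tel}_n X_n$ satisfies $\underline{\mathrm{Map}}_\bullet(X,E') \simeq \holim_n \underline{\mathrm{Map}}_\bullet(X_n,E')$ — a consequence of $X_n \to X_{n+1}$ being cofibrations and $E'$ being fibrant — gives the asserted sequence, once one identifies $\pi_0 \underline{\mathrm{Map}}_\bullet(X_n, E') = [X_n, E]_{\mc{H}^G(S)}$ and $\pi_1 \underline{\mathrm{Map}}_\bullet(X_n,E') = \pi_0\underline{\mathrm{Map}}_\bullet(X_n,\Omega E') = [X_n, \mathbf{R}\Omega^1 E]_{\mc{H}^G(S)}$. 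The final sentence of the lemma is then immediate: when the tower $([X_i,\mathbf{R}\Omega^1 E]_{\mc{H}^G(S)})_i$ is Mittag-Leffler its $\lim^1$ vanishes, so the surjection becomes an isomorphism; I would also remark that cofinality of $\mathbb{N}$ in $I$ lets one pass freely between $\lim_{i\in I}$ and $\lim_{n\in\mathbb{N}}$, and likewise for $\lim^1$, so the indexing over $I$ rather than $\mathbb{N}$ causes no trouble.

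The main obstacle I anticipate is purely formal rather than deep: one must be careful that the colimit $\colim_{i\in I}X_i$ taken in $M^G(S)$ genuinely represents the homotopy colimit after localization, i.e.\ that the canonical map from the homotopy colimit (telescope) to the strict colimit is a motivic equivalence, and that the $\mathbb{A}^1$- and Nisnevich-localizations do not disturb the $\holim$ on the mapping-space side. The first is fine because the transition maps are monomorphisms of simplicial presheaves (hence cofibrations in $M^G_{\mathrm{gl}}(S)$, and cofibrations are unchanged by left Bousfield localization), so the telescope is a cofibrant model for the colimit; the second is fine because we use a motivically fibrant model $E'$, against which mapping spaces already see the localized homotopy type. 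Once these identifications are in place the argument is a verbatim translation of \cite[Lem.~1.2.1]{R10}, and I would simply cite that reference for the remaining bookkeeping.
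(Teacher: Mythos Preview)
Your proposal is correct and follows essentially the same route as the paper: replace $E$ by a motivically fibrant model, identify $[X_i,\mathbf{R}\Omega^n E]_{\mc{H}^G(S)}$ with $\pi_n$ of the sections (equivalently, of the mapping space) at $X_i$, use that $\mathbf{Hom}(\colim_i X_i,E_f)\cong \lim_i E_f(X_i)$, and then invoke the classical Milnor $\lim^1$ sequence for $\pi_0$ of an inverse limit of simplicial sets. The paper's proof is slightly terser in that it does not discuss the telescope or the cofibrancy of $\colim_i X_i$ explicitly (in the global model structure on $M^G(S)$ cofibrations are defined by left lifting against schemewise trivial fibrations, so every object is cofibrant and your worries about strict vs.\ homotopy colimit are automatically handled); otherwise the two arguments are the same.
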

	\begin{proof}
		Let $E_f\in M_\bullet^G(S)$ be a fibrant model of $E$ with respect to the equivariant motivic model structure. We then have for any $F\in M^G(S)$ a bijection,
		\[ [F, E]_{\mc{H}_\bullet^G(S)}\cong [F, E_f]_{\mc{H}_\bullet^G(S)}\cong Hom_{M_\bullet^G(S)}(F, E_f)/\sim \]
		where $\sim$ denotes the equivalence relation induced by simplicial homotopy. For any $Y\in Sm^G_S$ we then have isomorphisms of groups,
		\[ [Y, \mathbf{R}\Omega^n E]_{\mc{H}^G(S)}\cong [Y_+, \mathbf{R}\Omega^n E]_{\mc{H}_\bullet^G(S)} \cong [S^n\wedge Y_+,E]_{\mc{H}_\bullet^G(S)}\cong [S^n\wedge Y_+,E_f]_{\mc{H}_\bullet^G(S)}\cong \pi_n E_f(Y)\]  
		for all $n\geq 0$. Applying this to the $X_i$'s we get $[X_i, \mathbf{R}\Omega^n E]_{\mc{H}^G(S)}\cong \pi_nE_f(X_i)$ for each $i,n\geq 0$. Also note that $\mathbf{Hom}_{M^G(S)}(\colim_{i\in I} X_i,E_f)\cong \lim_{i\in I}E_f(X_i)$. Using these two results we have reduced the problem to finding a short exact sequence
		\[0\to {\lim_{i\in I}}^1\pi_1 E_f(X_i)\to \pi_0 (\lim_{i\in I}E_f(X_i))\to \lim_{i\in I}\pi_0(E_f(X_i))\to 0\]
		 which is a classical result\cite[Section VI.2]{GJ99}.
		\end{proof}
		We will only be concerned with the cases where the $\lim^1$ term vanishes so we give this case a separate name along the lines of \cite{R10}.
		\begin{definition}\label{def:phantom}
			Let $E$ and $(X_i)_{i\in I}$ be as in Lemma \ref{lem:lim1}. We will say that the direct system $(X_i)_{i\in I}$ does not unveil phantoms in $E$ if $ ([X_i,\mathbf{R}\Omega^n E]_{\mc{H}^G(S)})_{i\in I}$ is a Mittag-Leffler system for all $n$. 
		\end{definition}
 		For simplicity we fix a functorial factorisation $E\xrightarrow{\eta_E} E_f\to *$ which gives us a fibrant replacement functor $E\mapsto E_f$ and a natural transformation $\eta:1_{M_\bullet^G(S)}\Rightarrow (-)_f$ which is a weak equivalence at each object. For each $E\in M_\bullet^G(S)$, $\eta_E$ induces a morphism $E\to\underline{\pi}_0E $ in $M_\bullet^G(S)$ given by 
 		\[ E(X)\xrightarrow{\eta_E} E_f(X)\to [X,E]_{_{\mc{H}_\bullet^G(S)}}=\underline{\pi}_0E(X) \]
 		for all $X\in Sm^G_S$. 
 		Now we need an equivariant version of Jouanolou's trick. Hoyois has given a version of it for $G$-quasi-projective schemes \cite[Proposition~2.20]{H17}. Unlike the non-equivariant case not every $G$-scheme can be covered by $G$-invariant affine opens. However, in the case of constant finite groups schemes every $G$-scheme has an equivariant \emph{Nisnevich} cover comprised of affine schemes \cite[Lemma~2.20]{HKO14}. We generalise the notion of affine bundles accordingly.
 		\begin{definition}
 			An $N$-affine bundle over a $G$-equivariant scheme $X$ is a $G$-equivariant $X$-scheme $Y\to X$ such that $Y\times U_i\to U_i$ is a $G$-vector bundle for some Nisnevich cover of $X$, $\{ U_i\to X \}_{i\in I}$.   
 		\end{definition}
 		It follows from this definition that if $\pi:Y\to X$ is an $N$-affine bundle then $\pi$ is a $G$-motivic weak equivalence. Using this we get a straightforward generalisation of Hoyois's result.
 		\begin{theorem}\label{thm:Jtrick}
 			Let $S$ be a regular scheme and $G\to S$ be a constant finite group scheme over $S$ with $\textsl{g}^{-1}\in \Gamma(S,\mc{O}_S)$. For every $X\to S \in Sm^G_S$, there exists an $N$-affine bundle $W\to X$ where $W$ is (globally) affine.
 		\end{theorem}
 		We now have sufficient results to prove the following.
 		\begin{theorem}\label{thm:gen_bijection}
 		 Let $E$ be a group object in $\mc{H}^G(S)$. Let $(X_i)_{i\in I}$ be a directed system of objects in $Sm^G_S$ which does not unveil phantoms in $E$. Let $X=\colim_{i\in I} X_i$ be the colimit in $M^G(S)$. If $X(U)\to \underline{\pi}_0X(U)$ is onto for all affine $U\in Sm^G_S$ then the map
 			\[ [X,E]_{\mc{H}^G(S)}\to Hom_{M^G(S)}(\underline{\pi}_0 X, \underline{\pi}_0E) \] 
 			as given in $(\ref{eq:alpha})$ is a bijection.
 		\end{theorem}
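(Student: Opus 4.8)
The plan is to deduce the bijectivity of $\alpha_{X,E}$ from Lemma~\ref{lem:lim1} together with the surjectivity hypothesis, by a standard diagram chase in the colimit. First I would reduce to the level of presheaves by choosing the fibrant model $E_f$ and the natural map $\eta$, so that for all $Y \in Sm^G_S$ one has $[Y, \mathbf{R}\Omega^n E]_{\mc{H}^G(S)} \cong \pi_n E_f(Y)$ and $\underline{\pi}_0 E(Y) = \pi_0 E_f(Y)$, exactly as in the proof of Lemma~\ref{lem:lim1}. Since $X = \colim_{i\in I} X_i$ in $M^G(S)$ and $I$ contains $\mathbb{N}$ cofinally, and since $(X_i)$ does not unveil phantoms in $E$, Lemma~\ref{lem:lim1} gives
\[
[X, E]_{\mc{H}^G(S)} \;\iso\; \lim_{i\in I} [X_i, E]_{\mc{H}^G(S)} \;=\; \lim_{i\in I} \pi_0 E_f(X_i).
\]
On the other hand $\underline{\pi}_0 X = \colim_{i\in I} \underline{\pi}_0 X_i$ as presheaves (colimits of simplicial presheaves are computed objectwise and $\underline{\pi}_0$ commutes with the filtered colimit here), so
\[
Hom_{M^G(S)}(\underline{\pi}_0 X, \underline{\pi}_0 E) \;=\; \lim_{i\in I} Hom_{M^G(S)}(\underline{\pi}_0 X_i, \underline{\pi}_0 E).
\]
Thus it suffices to show that $\alpha_{X_i, E}\colon \pi_0 E_f(X_i) \to Hom_{M^G(S)}(\underline{\pi}_0 X_i, \underline{\pi}_0 E)$ is a bijection for each $i$, compatibly with the transition maps; the two displayed identifications then assemble into the desired bijection.

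Next I would prove the pointwise statement. Fix $i$ and write $Y = X_i$. A map $\underline{\pi}_0 Y \to \underline{\pi}_0 E$ in $M^G(S)$ is a natural transformation of presheaves of sets, hence determined by its values on affine $U \in Sm^G_S$ by the Jouanolou-type Corollary~\ref{cor:Jtrick}: every $U \in Sm^G_S$ receives an $N$-affine bundle $W \to U$ with $W$ affine, and an $N$-affine bundle is a $G$-motivic weak equivalence, so $\underline{\pi}_0 Y(U) \cong \underline{\pi}_0 Y(W)$ and similarly for $E$, reducing naturality checks to affine schemes. By the Yoneda-type description, $\underline{\pi}_0 Y(U) = [U, Y]_{\mc{H}^G(S)} = [U, \underline{\pi}_0 Y]$ when $U$ is affine (using that $Y$ is representable, so $\underline{\pi}_0 Y$ is its $\mathbb{A}^1$-localization), and similarly an element of $\pi_0 E_f(Y)$ is a homotopy class of maps $Y \to E_f$. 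The surjectivity hypothesis says $X(U) \to \underline{\pi}_0 X(U)$ is onto for all affine $U$; passing to the colimit (or working at a fixed stage and using cofinality) this gives that every compatible family of classes in $\underline{\pi}_0 X_i(U)$ lifts to an actual map, which is what produces a preimage under $\alpha$. Concretely: given $\varphi\colon \underline{\pi}_0 X \to \underline{\pi}_0 E$, evaluate at affine $U$, lift the identity class in $\underline{\pi}_0 X(U)$-side along $X(U) \to \underline{\pi}_0 X(U)$, push through, and assemble the resulting homotopy class of maps $X \to E_f$; naturality of this construction over affine $U$, hence over all $U$ by Corollary~\ref{cor:Jtrick}, gives a well-defined element of $[X, E]_{\mc{H}^G(S)}$ mapping to $\varphi$. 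Injectivity follows because two maps $X \to E_f$ inducing the same natural transformation $\underline{\pi}_0 X \to \underline{\pi}_0 E$ agree on $\pi_0$ of every affine $U$-section, and the $\lim^1$-vanishing plus the identification above upgrades this sectionwise agreement to a homotopy.

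The main obstacle I expect is the surjectivity/lifting step: turning the hypothesis "$X(U) \to \underline{\pi}_0 X(U)$ onto for affine $U$" into a construction of an actual morphism $X \to E_f$ from an abstract natural transformation of $\pi_0$-presheaves. This requires care because $\underline{\pi}_0$ forgets all higher homotopy, so one must build the map $X \to E_f$ out of sectionwise data and check it is well-defined up to homotopy — this is where the no-phantoms (Mittag-Leffler) condition is used a second time, to control the ambiguity in the lifts across the tower $(X_i)$. The other delicate point is verifying that $\underline{\pi}_0$ genuinely commutes with the relevant filtered colimit and that the Jouanolou reduction is compatible with the group-object structure on $E$; both are routine given Corollary~\ref{cor:Jtrick} and the construction of $\underline{\pi}_0$, but they must be stated cleanly. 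Everything else — the two $\lim$ identifications, the Yoneda rewriting, the assembly — is formal once the pointwise bijection is in hand.
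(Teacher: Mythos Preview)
Your overall strategy has a structural gap. You reduce to showing that $\alpha_{X_i,E}\colon [X_i,E]\to Hom_{M^G(S)}(\underline{\pi}_0 X_i,\underline{\pi}_0 E)$ is a bijection for each individual $i$. But this pointwise statement is false in general: for a representable $Y$, the map $\alpha_{Y,E}$ is always a split injection (evaluate at $\mathrm{id}_Y\in\underline{\pi}_0 Y(Y)$), but there is no reason for every natural transformation $\underline{\pi}_0 Y\to\underline{\pi}_0 E$ to come from an actual map $Y\to E$ in $\mc{H}^G(S)$. If it did, the theorem would hold for any single scheme and the ind-structure, the no-phantoms hypothesis, and the surjectivity assumption would all be superfluous. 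Relatedly, your claim that $\underline{\pi}_0 X=\colim_i\underline{\pi}_0 X_i$ is not justified: $\underline{\pi}_0$ is built from a fibrant replacement, which does not commute with colimits, so you cannot pass the $\colim$ inside. Note also that the surjectivity hypothesis concerns $X(U)\to\underline{\pi}_0 X(U)$, not $X_i(U)\to\underline{\pi}_0 X_i(U)$; your pointwise argument never actually invokes it in a usable form.

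The paper's argument avoids all of this by working with a different factorisation. Rather than mapping \emph{out} of $\underline{\pi}_0 X_i$, it maps \emph{into} $\underline{\pi}_0 E$ from $X$ itself: one has $Hom_{M^G(S)}(X,\underline{\pi}_0 E)\cong\lim_i\underline{\pi}_0 E(X_i)=\lim_i[X_i,E]$, which needs no commutation of $\underline{\pi}_0$ with colimits. This sits in a commutative triangle
\[
\begin{tikzcd}
{[X,E]}_{\mc{H}^G(S)}\arrow[r,"\alpha_{X,E}"]\arrow[rd,"\sim"] & Hom_{M^G(S)}(\underline{\pi}_0 X,\underline{\pi}_0 E)\arrow[d,"\beta"]\\
& \lim_i [X_i,E]_{\mc{H}^G(S)}
\end{tikzcd}
\]
where the diagonal is the isomorphism from Lemma~\ref{lem:lim1} (no phantoms). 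Hence $\alpha$ is injective and $\beta$ is surjective automatically, and the only thing left is injectivity of $\beta$. That is where the surjectivity hypothesis enters: if $f_0,f_1\colon\underline{\pi}_0 X\to\underline{\pi}_0 E$ have the same image under $\beta$, then their precompositions with $X\to\underline{\pi}_0 X$ agree; since $X(U)\to\underline{\pi}_0 X(U)$ is onto for affine $U$, the maps $f_0,f_1$ agree on affines, and Corollary~\ref{cor:Jtrick} extends this to all of $Sm^G_S$. No lifting of abstract natural transformations to actual maps is required.
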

 		\begin{proof}
 			As $X$ does not unveil phantoms in $E$, we have an isomorphism of groups (and hence a bijection) $[X,E]_{\mc{H}^G(S)} \cong \lim_{i\in I} [X_i,E]$. This fits into a commutative diagram,
 			\[
 				\begin{tikzcd}
 				{[X,E]}_{\mc{H}^G(S)}\arrow[r,"\alpha_{X,E}"]\arrow[rd,"\sim"] &Hom_{M^G(S)}(\underline{\pi}_0 X, \underline{\pi}_0E)\arrow[d,"\beta"] \\
 				&\lim_{i\in I} [X_i,E]_{\mc{H}^G(S)}
 				\end{tikzcd}
 				\]
 				where $\beta$ is the composite map 
 				\[Hom_{M^G(S)}(\underline{\pi}_0 X, \underline{\pi}_0E)\to Hom_{M^G(S)}(X, \underline{\pi}_0E)\cong \lim_{i\in I} [X_i,E]_{\mc{H}^G(S)}.\]
 				It follows that $\alpha_{X,E}$ is an injection and $\beta$ is a surjection. To show $\alpha_{X,E}$ is a bijection it is enough to show that $\beta$ is an injection. Given any pair $f_0,f_1\in Hom_{M^G(S)}(\underline{\pi}_0 X, \underline{\pi}_0E)$ which are mapped onto the same element in $Hom_{M^G(S)}(X, \underline{\pi}_0E)$, we have the diagram
 				\[ X\to\underline{\pi}_0 X\xrightarrow[f_1]{f_0} \underline{\pi}_0E \]
 				where $X(U)\to \underline{\pi}_0 X(U)$ is a surjection for all $U$ affine.  Hence $f_0$ and $f_1$ are equal when evaluated on affine $G$-schemes. As $\underline{\pi}_0 X$ and $\underline{\pi}_0E$ are invariant under equivariant motivic equivalences, Theorem \ref{thm:Jtrick} implies that $f_0=f_1$ and we are done.   
 		\end{proof}
 	
 \section{\texorpdfstring{$K$}{K}-theory of Grassmannians} \label{Sec:K-theory_of_grassmannians}	
	For any $X\in Sm^G_S$, there is a canonical map of sets
	\begin{equation}\label{eq:maptopi0}
	Hom_{M^G(S)}(X,\ZZ\times Gr_G)\to K_0(G,X) 
	\end{equation}
	induced by the map in Theorem~\ref{thm:MT}. The infinite Grassmannian $\ZZ\times Gr_G$ is given by the filtered colimit $\colim_{i,r,n} \{i \}\times  Gr(r\textsl{g},\rho^{\oplus n})$, with $r,n\in \mathbb{N}$ and $i\in\ZZ$. Hence we have,
    \[\colim_{i,r,n} Hom_{M^G(S)}(X,\{i\}\times Gr(r\textsl{g},\rho^{\oplus n})) \iso Hom_{M^G(S)}(X,\ZZ\times Gr_G)\]
    and \ref{eq:maptopi0} is given by 
    \[ (i,M\mono \rho^{\oplus n})\mapsto [M]-r[\rho]+i[\mathcal{O}_X] \]
    where $\mathcal{O}_X$ has the trivial action. Here we use the fact that the morphism $Gr(r\textsl{g},\rho^{\oplus n})\to Gr(r+k\textsl{g},\rho^{\oplus n+k+m})$  is given by $M\mono \rho^{\oplus n}\mapsto (M\oplus\rho^{\oplus k}\mono \rho^{\oplus n}\oplus\rho^{\oplus k+m}$).
	\begin{theorem}\label{th:ontopi0}
		For all affine $X\in Sm^G_S$, the map in $\mathrm(\ref{eq:maptopi0})$ is onto.
	\end{theorem}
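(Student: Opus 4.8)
The plan is to first unwind the map \eqref{eq:maptopi0} at the level of $\pi_0$, and then deduce surjectivity directly from Lemma \ref{Equivalent Category of Summands in Regular Representations}. Fix $X=\Spec R$ in $Sm^G_S$ affine, and suppose first that $X$ is connected. Because the ind-Grassmannian $Gr_G$ is a presheaf of sets, a morphism $X\to\ZZ\times Gr_G$ in $PSh(Sm^G_S)$ is exactly a pair $(n,W)$ with $n\in\ZZ$ and $W$ a $G$-equivariant direct summand of $\rho_R^{k}\oplus\rho_R^{N}$ of constant rank $k\textsl{g}$ for some $k,N\ge 0$; here I use Remark \ref{cofinal} to index the colimit \eqref{Gr_G} by the powers $\rho_R^k$ of the regular representation. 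Tracing this element through $\mathscr{G}r\xrightarrow{\sim}Gr_G$, then through $\mathscr{G}r\to\mathscr{S}^+\to\widetilde{\mathcal K}$ of Theorem \ref{thm:simplHtpyEq}, and finally through $\ZZ\times\widetilde{\mathcal K}\xrightarrow{\sim}\mathcal K$ of \eqref{ReducedNonreduced}, I expect to identify the image of $(n,W)$ in $K_0(G,X)=\pi_0\mathcal K(X)$ with $\pm n[R]+[W]-k[\rho_R]$, where $[R]$ is the class of the trivial rank-one $G$-bundle. (On affine $X$ the group $K_0(G,X)$ is literally the group completion of the monoid of isomorphism classes of $G$-equivariant projective $R$-modules, by Definition \ref{AffineVersionEuivariantKTheory}, so there is nothing homotopy-theoretic to verify here beyond this formula.)

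Granting the formula, surjectivity is a rank bookkeeping argument. Let $\xi\in K_0(G,X)$ and write $\xi=[P]-[Q]$ with $P,Q$ finitely generated $G$-equivariant projective $R$-modules. Put $d=\mathrm{rk}\,P-\mathrm{rk}\,Q$; replacing $Q$ by $Q\oplus R^{d}$ when $d\ge0$, or $P$ by $P\oplus R^{-d}$ when $d<0$, I can write $\xi=d[R]+\xi'$ with $\xi'=[P']-[Q']$ and $\mathrm{rk}\,P'=\mathrm{rk}\,Q'$. By Lemma \ref{Equivalent Category of Summands in Regular Representations} there are $Q''$ and $k$ with $Q'\oplus Q''\cong\rho_R^{k}$, so $\xi'=[P'\oplus Q'']-k[\rho_R]$ with $P'\oplus Q''$ of rank $k\textsl{g}$. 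Applying Lemma \ref{Equivalent Category of Summands in Regular Representations} a second time, $P'\oplus Q''$ is a $G$-equivariant direct summand of some $\rho_R^{m}$, and after enlarging $m$ we may assume $m\ge k$; viewing $W:=P'\oplus Q''\subset\rho_R^{k}\oplus\rho_R^{m-k}$ as a $G$-equivariant rank-$k\textsl{g}$ summand, the pair $(d,W)$ (with the sign chosen to match the formula above) is a morphism $X\to\ZZ\times Gr_G$ whose image under \eqref{eq:maptopi0} is $d[R]+[W]-k[\rho_R]=d[R]+\xi'=\xi$.

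For a general affine $X\in Sm^G_S$, the scheme is regular and $R$ noetherian, so $X$ has only finitely many connected components, and these coincide with its irreducible components; $G$ permutes them. A $G$-equivariant vector bundle has a well-defined rank along each $G$-orbit of components, and $Hom_{PSh(Sm^G_S)}(X,\ZZ)$ is precisely the group of locally constant $\ZZ$-valued functions that are constant on $G$-orbits of components. The argument above then runs orbit by orbit, the $\ZZ$-factor supplying one integer per orbit. The step I expect to require the most care is the explicit $\pi_0$-identification in the first paragraph — in particular matching the normalisation of the $\ZZ$-factor coming from \eqref{ReducedNonreduced} with the shift $-k[\rho_R]$ forced by the indexing of \eqref{Gr_G} — since once that formula is in hand, surjectivity uses nothing beyond Lemma \ref{Equivalent Category of Summands in Regular Representations} and the fact, resting on affineness of $X$ and invertibility of $|G|$, that every $G$-equivariant projective module is a direct summand of a sum of copies of the regular representation.
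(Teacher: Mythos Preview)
Your argument is correct and in fact a bit cleaner than the paper's. Both proofs rest on the same identification of the map \eqref{eq:maptopi0} at the level of $\pi_0$: a point $(j,W)$ with $W$ a rank-$k\textsl g$ equivariant summand at level $\rho_R^k$ goes to $j[R]+[W]-k[\rho_R]$ (your trace through \eqref{ReducedNonreduced} and the hocolim description is the right way to pin down the sign and the shift). Where the approaches diverge is in the bookkeeping that follows. The paper first absorbs $Q$ into a power of $\rho$ to write $[P]-[Q]=[P']-n[\rho]$, then pads $P'$ by copies of $R$ to make its rank divisible by $\textsl g$, and finally splits into the two cases $m<n$ and $m>n$, the second of which requires the augmentation ideal $I=\ker(\rho\to R)$ to rebalance. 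You instead strip off the rank defect $d$ into the $\ZZ$-factor at the outset, which forces $\xi'$ to have virtual rank zero; after one application of Lemma~\ref{Equivalent Category of Summands in Regular Representations} the remaining module $P'\oplus Q''$ automatically has rank $k\textsl g$, and a second application realises it inside some $\rho_R^m$. This removes the case split and the need for $I$ entirely. Your treatment of the disconnected case, noting that $Hom(X,\ZZ)$ supplies one integer per $G$-orbit of components, is a point the paper leaves implicit.
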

	\begin{proof}
     It is sufficient to show this for $G$-connected schemes. Let $X\cong \spec R$ be affine and $G$-connected. There is an epimorphism of $G$-vector bundles $s:\rho \to R$ given by $s(\sum_g r_ge_g)=\sum r_g$. Let $I$ denote the kernel of this map. The morphism $Hom_{M^G(S)}(X,\{i\}\times Gr(r\textsl{g},\rho^{\oplus n}))\to K_0(G,X)$ is given by
		\begin{equation}\label{loc:map}
		(i,M\mono \rho^{\oplus n})\mapsto [M]-r[\rho]+i[R] 
		\end{equation}  
		 for each $r,n\in\mathbb{N}$ and $i\in\ZZ$ as discussed above. Let $[P]-[Q]\in K_0(G,\spec R)$. By Lemma~\ref{Equivalent Category of Summands in Regular Representations}, $Q$ is a direct summand of $\rho^{\oplus n}$ for some $n$. Therefore $[Q]=n[\rho]-[Q']$ for some $G$-vector bundle $Q'$ and we have 
		 \[[P]-[Q]=[P]-n[\rho]+[Q']=[P\oplus Q']-n[\rho]=[P']-n[\rho]\]
		 in $K_0(G, \spec R)$. If rank of $P'$ is $p=b\textsl{g}+r$ with $r<\textsl{g}$ then we can add and subtract $(\textsl{g}-r)$ copies of $[R]$ to get 
		 \[ [P']-n[\rho]=[P'\oplus R^{\oplus \textsl{g}-r}]-n[\rho]-(\textsl{g}-r)[R] \]
		 where $P'\oplus R^{\oplus \textsl{g}-r}$ then has rank a multiple of $\textsl{g}$. Therefore every element in $K_0(G,\spec R)$ is of the form $[M]-n[\rho]-i[R]$ where rank of $M$ is a multiple of $\textsl{g}$. By \ref{loc:map} it is enough to show that any element of this form is equal to some $[M']-n'[\rho]-i'[R]$ with rank of $M=n'\textsl{g}$. Let $rank(M)=m\textsl{g}$ we then have two cases, $m<n$ and $m>n$. If $m<n$ then 
		 \begin{multline*}
		 [M]-n[\rho]-i[R]=[M]+(n-m)\textsl{g}[R]-(n-m)\textsl{g}[R]-n[\rho]-i[R]\\=[M\oplus R^{\oplus (n-m)\textsl{g}}]-n[\rho]-(n\textsl{g}-m\textsl{g}+i)[R]
		 \end{multline*}
		 which is in the desired form. When $m>n$ then 
		 \begin{multline*}
		 	[M]-n[\rho]-i[R]=[M]+(m-n)\textsl{g}[I]-(m-n)\textsl{g}[I]-n[\rho]-i[R]\\
		 	=[M\oplus I^{\oplus (m-n)\textsl{g}}]-(m-n)\textsl{g}[\rho]-n[\rho]+((m-n)\textsl{g}-i)[R]\\=[M\oplus I^{\oplus (m-n)\textsl{g}}]-((m-n)\textsl{g}+n)[\rho]+((m-n)\textsl{g}-i)[R]
		 \end{multline*}  
		 which is of the desired form.
	\end{proof}
	To prove Theorem~\ref{thm:main} we need to calculate the $K$-groups of the Grassmannian schemes. We have from \cite[Theorem~3.1]{T87} the $G$-equivariant projective bundle theorem. 
	\begin{theorem}[Projective bundle theorem]\label{thm:pbt}
		Let $X$ be a $G$-equivariant $S$-scheme and let $\mc{E}$ be a $G$-vector  bundle over $X$ of constant rank $n$. There is a canonical $G$-action on $\mathbb{P}(\mc{E})$ induced by the action on $E$. The functor
		 \[\prod^n_{i=1} VB(G,X)\to VB(G,\mathbb{P}(\mc{E})) \]
		 given by $(E_1,E_2,\dots,E_n)\mapsto \oplus^n_{i=1}f^*E_i\otimes\mc{O}(-i)$
		 induces a homotopy equivalence $K(G, X)^n\iso K(G, \mathbb{P}(\mc{E}))$.
	\end{theorem}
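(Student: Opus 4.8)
The plan is to adapt Quillen's proof of the non-equivariant projective bundle formula, checking that every geometric construction involved is canonical enough to carry the $G$-action; the homotopy-theoretic input --- the additivity, resolution and localization theorems for the $K$-theory of exact categories --- is available $G$-equivariantly in Thomason's framework \cite{T87}, so the substance of the argument is the bookkeeping of equivariant structures.

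First I would record the cohomology of twists of the structure map $f\colon \mathbb{P}(\mc{E})\to X$. From the (canonical, hence $G$-equivariant) Euler sequence one computes $R^qf_*\mc{O}_{\mathbb{P}(\mc{E})}(k)$ as the usual symmetric and exterior powers of $\mc{E}$ and $\mc{E}^\vee$; in particular $Rf_*\mc{O}_{\mathbb{P}(\mc{E})} = \mc{O}_X$ and $Rf_*\mc{O}_{\mathbb{P}(\mc{E})}(k) = 0$ for $-n < k < 0$. Since $f$ is $G$-equivariant and these identifications are natural in $\mc{E}$, they hold as identifications of $G$-equivariant $\mc{O}_X$-modules. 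This yields the semiorthogonality relations for the functors $\Phi_i\colon VB(G,X)\to VB(G,\mathbb{P}(\mc{E}))$, $E\mapsto f^*E\otimes\mc{O}(-i)$ (for $i=1,\dots,n$): between the essential images there are no maps in the appropriate derived range, and each $\Phi_i$ is fully faithful because $Rf_*\mc{O}_{\mathbb{P}(\mc{E})} = \mc{O}_X$.

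Next I would set up Quillen's filtration of the exact category $VB(G,\mathbb{P}(\mc{E}))$ by the full subcategories $\mc{T}_i$ of bundles generated in twists $\ge -i$, using that the tautological subbundle sequence on $\mathbb{P}(\mc{E})$ is canonical and therefore $G$-equivariant. The successive quotients $\mc{T}_i/\mc{T}_{i-1}$ are equivalent --- via $F\mapsto f_*\bigl(F(i)\bigr)$ in one direction and $\Phi_i$ in the other --- to $VB(G,X)$, exactly as in the non-equivariant proof; equivalently, one builds the $G$-equivariant Beilinson resolution of the diagonal in $\mathbb{P}(\mc{E})\times_X\mathbb{P}(\mc{E})$ out of Koszul complexes of the Euler sequence. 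Applying the additivity theorem (in Thomason's equivariant form) to this filtration then assembles the $\Phi_i$ into a homotopy equivalence $\bigoplus_{i=1}^n K(G,X)\iso K(G,\mathbb{P}(\mc{E}))$ whose inverse is $F\mapsto \bigl(f_*F(1),\dots,f_*F(n)\bigr)$ --- which is precisely the assertion, with $p(\tau)$ the action on $\mathbb{P}(\mc{E})$ induced by $\tau$.

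The hard part will be this last step: verifying that Quillen's filtration argument --- the surjectivity of the relevant counits, the exactness of the Koszul-type resolutions, and the exact structure on the quotient categories --- goes through verbatim inside $VB(G,-)$. This holds because every sheaf and every morphism occurring is produced from $\mc{E}$, $\mc{O}_{\mathbb{P}(\mc{E})}(1)$ and $f$ by functorial operations and hence inherits a canonical $G$-structure, and because the required additivity and resolution theorems for $K(G,-)$ are supplied by \cite{T87}; but the verification is not entirely formal. If one prefers, the same content can be packaged as a $G$-equivariant semiorthogonal decomposition of perfect complexes on $\mathbb{P}(\mc{E})$ together with the Waldhausen-categorical additivity theorem in the style of Thomason--Trobaugh, the comparison with vector-bundle $K$-theory being harmless here since only honest vector bundles occur in the semiorthogonal pieces.
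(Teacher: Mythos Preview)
Your outline is correct and is essentially the content of Thomason's original proof in \cite[Thm.~3.1]{T87}: one transports Quillen's filtration/Beilinson-resolution argument for the non-equivariant projective bundle formula into the $G$-equivariant setting, using that every sheaf and map appearing (the Euler sequence, the tautological bundle, the Koszul resolutions) is canonical in $\mc{E}$ and hence $G$-equivariant, and then invokes additivity/resolution for $K$-theory of exact categories.

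The paper, however, does not give a proof at all: it simply cites the result as \cite[Thm.~3.1]{T87} and moves on. So there is nothing to compare at the level of argument---your proposal supplies the substance that the paper outsources to Thomason. If anything, your write-up is more detailed than necessary for the purposes of this paper, since the authors treat the projective bundle theorem as a black box and only use its consequences (Corollaries \ref{cor:pbtK0} and \ref{cor:K_completeflag}, Theorem \ref{thm:K_Gr}).
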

	We can go further and describe $K_*(G,\mathbb{P}(\mc{E}))$ as a $K_*(G, X)$-algebra along the same lines as the non-equivariant case.
	\begin{corollary}\label{cor:pbtK0}
		Let $X$ and $\mc{E}$ be as above. The homotopy equivalence $K(G,X)^n\iso K(G,\mathbb{P}(\mc{E}))$ given in Theorem $\ref{thm:pbt}$ induces an isomorphism
		\[ K_*(G,\mathbb{P}(\mc{E}))\cong K_*(G, X)\otimes_{K_0(G, X)}K_0(G, \mathbb{P}(\mc{E})) \]
		of $K_*(G,X)$-algebras. 
	\end{corollary}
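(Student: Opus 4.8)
The plan is to upgrade the additive splitting of Theorem~\ref{thm:pbt} to a presentation of $K_*(G,\mathbb{P}(\mc{E}),p(\tau))$ as a free $K_*(G,X,\sigma)$-module on powers of the tautological class, exactly along the lines of the non-equivariant case, and then to observe that the resulting basis is defined over $K_0$. Throughout, write $f\colon\mathbb{P}(\mc{E})\to X$ for the projection and $L=[\mc{O}(-1)]\in K_0(G,\mathbb{P}(\mc{E}),p(\tau))$ for the class of the tautological $G$-equivariant line bundle; being a line bundle class, $L$ is a unit with inverse $[\mc{O}(1)]$. I will freely use the standard structural facts for Thomason's equivariant $K$-theory: it is multiplicative, $f^*$ is a ring homomorphism making $K_*(G,\mathbb{P}(\mc{E}),p(\tau))$ a $K_*(G,X,\sigma)$-algebra, and tensoring by the line bundle $\mc{O}(-i)$ acts on $K_*$ as multiplication by $[\mc{O}(-i)]=L^i$.

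First I would check that the homotopy equivalence of Theorem~\ref{thm:pbt} is a morphism of $K(G,X,\sigma)$-modules. Precomposing it with the $i$-th coordinate inclusion produces the exact functor $E\mapsto f^*E\otimes\mc{O}(-i)$, whose effect on $K_*$ is $y\mapsto f^*y\cdot L^i$, and this is $K_*(G,X,\sigma)$-linear for the module structure given by $f^*$. Hence $K_*(G,\mathbb{P}(\mc{E}),p(\tau))$ is a free graded $K_*(G,X,\sigma)$-module with basis $\{L^i\}_{i=1}^n$, equivalently (after multiplying by the unit $L^{-1}$, an automorphism of the underlying $K_*(G,X,\sigma)$-module) with basis $\{L^j\}_{j=0}^{n-1}$; specializing to degree zero, $K_0(G,\mathbb{P}(\mc{E}),p(\tau))$ is free over $K_0(G,X,\sigma)$ on $\{L^j\}_{j=0}^{n-1}$.

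Next I would form the multiplication map
\[
\mu\colon K_*(G,X,\sigma)\otimes_{K_0(G,X,\sigma)}K_0(G,\mathbb{P}(\mc{E}),p(\tau))\longrightarrow K_*(G,\mathbb{P}(\mc{E}),p(\tau)),\qquad a\otimes b\mapsto f^*a\cdot b ,
\]
which is well defined since $f^*$ is a ring map and $K_0(G,X,\sigma)$ is central by graded-commutativity, and which is a homomorphism of graded $K_*(G,X,\sigma)$-algebras. By the previous step the source is a free $K_*(G,X,\sigma)$-module on $\{1\otimes L^j\}_{j=0}^{n-1}$ and the target is free on $\{L^j\}_{j=0}^{n-1}$, and $\mu$ sends the former basis bijectively onto the latter; a $K_*(G,X,\sigma)$-linear map carrying a basis to a basis is an isomorphism, so $\mu$ is an isomorphism of modules, hence of algebras.

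I expect the only real obstacle to be the first step: recognising that the equivalence of Theorem~\ref{thm:pbt} is linear over $K(G,X,\sigma)$, so that the splitting can be written with the \emph{powers} of $L$ (rather than merely some abstract basis) and the change-of-basis argument goes through. This reduces to the compatibility of the functor $(E_i)_i\mapsto\bigoplus_i f^*E_i\otimes\mc{O}(-i)$ with tensoring each input by a fixed bundle on $X$ (which corresponds to tensoring the output by its pullback), together with the two standard facts that pullback induces $f^*$ on $K_*$ and that tensoring by a line bundle induces multiplication by its class in $K_0$; all of these are available for Thomason's equivariant $K$-theory. The remainder is the purely formal observation used above.
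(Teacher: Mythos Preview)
Your argument is correct and is essentially the paper's proof, just unpacked. The paper argues more tersely via the extension--restriction adjunction: it takes the commutative square of $K_0(G,X,\sigma)$-modules coming from Theorem~\ref{thm:pbt} (with $K_0(G,X,\sigma)^n\to K_*(G,X,\sigma)^n$ on top and $K_0\to K_*$ of the projective bundle on the bottom), base-changes along $K_0(G,X,\sigma)\to K_*(G,X,\sigma)$, observes the top becomes the identity, and concludes the bottom is an isomorphism. Your version makes the same isomorphism explicit by naming the free basis $\{L^j\}_{j=0}^{n-1}$ and checking that the multiplication map $\mu$ matches bases; this is exactly the content of the paper's diagram chase, and the extra identification of the basis with powers of $[\mc{O}(-1)]$ is in any case what the paper does in the very next corollary.
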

	\begin{proof}
		First, by Theorem \ref{thm:pbt} we have a commutative diagram of $K_0(G, X)$-modules 
		\[
			\begin{tikzcd}
		 		K_0(G, X)^n\arrow[r]\arrow[d,"\sim" allign] & K_*(G, X,)^n \arrow[d,"\sim" allign]\\
				K_0(G, \mathbb{P}(\mc{E}))\arrow[r] & K_*(G, \mathbb{P}(\mc{E}))
                \end{tikzcd}
			\]
		where the vertical maps are isomorphisms and the bottom map is a ring homomorphism. Using the extension-restriction adjunction with respect to the ring homomorphism $K_0(G, X)\to K_*(G, X)$ we get the following commutative diagram 
		\[
		 	\begin{tikzcd}
		 		K_*(G, X)\otimes_{K_0(G, X)} K_0(G, X)^n\arrow[r]\arrow[d,"\sim" allign] & K_*(G, X)^n \arrow[d,"\sim" allign]\\
		 		K_*(G, X)\otimes_{K_0(G, X)}K_0(G, \mathbb{P}(\mc{E}))\arrow[r] & K_*(G, \mathbb{P}(\mc{E}))
		 		\end{tikzcd}
		 	\]
		 of $K_*(G, X)$-modules. Here again, the bottom map is a ring homomorphism. Note that the top map is bijection and hence so is the bottom map.
		\end{proof}
	 We can give an explicit description of the ring $K_0(G, \mathbb{P}(\mc{E}))$ using $\lambda$-operations. For any $X$ let the $n^{th}$ lambda operation map $\lambda^n:K_0(G, X)\to K_0(G, X)$ be given by the exterior powers, $[\mc{E}]\mapsto [\wedge^n\mc{E}]$. The collection $(\lambda^n)_{n\in\mathbb{N}}$ defines a $\lambda$-ring structure on $K_0(G,X)$. The details are given in \cite[Chapter II]{W13} for the non-equivariant case and can be straightforwardly extended to the equivariant case.
	\begin{corollary}
		Let $X$ be a $G$-equivariant $S$-scheme and let $\mc{E}$ be a $G$-vector bundle over $X$ of constant rank $n$. Let $f:\mathbb{P}(\mc{E})\to X$ be the corresponding projective bundle. The $K_0(G,X)$-algebra homomorphism $c:K_0(G, X)[t]\to K_0(G, \mathbb{P}(\mc{E}))$ given by $t\mapsto [\mc{O}(-1)]$ induces an isomorphism of rings,
		\begin{align*}
			& K_0(G,X)[t]/(\theta) \cong K_0(G,\mathbb{P}(\mc{E})),	&\mathrm{where}\:\: \theta= \sum_{i=0}^{n-1} (-1)^i \lambda^{n-i} [\mc{E}]t^i	
			\end{align*}	 
			and hence by Corollary \ref{cor:pbtK0} we get an isomorphism of graded $K_*(G, X)$-algebras, 
			\[K_*(G, X)[t]/(\theta) \cong K_*(G, \mathbb{P}(\mc{E})).\]
		\end{corollary}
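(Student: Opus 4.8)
The plan is to mimic the classical computation of $K_0$ of a projective bundle, checking that every ingredient is available $G$-equivariantly. First I would record surjectivity of $c$ together with a module basis on the target. By Theorem~\ref{thm:pbt} the functor $(E_1,\dots,E_n)\mapsto\oplus_{i=1}^n f^*E_i\otimes\mc{O}(-i)$ makes $K_0(G,\mathbb{P}(\mc{E}),p(\tau))$ a free $K_0(G,X,\sigma)$-module on $[\mc{O}(-1)],[\mc{O}(-2)],\dots,[\mc{O}(-n)]$; since $[\mc{O}(-i)]=[\mc{O}(-1)]^i$ lies in the image of $c$, already a generating set does, so $c$ is onto. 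As $[\mc{O}(1)]=[\mc{O}(-1)]^{-1}$ is a unit, multiplying this basis by $[\mc{O}(1)]$ shows that $\{1,[\mc{O}(-1)],\dots,[\mc{O}(-1)]^{n-1}\}$ is also a $K_0(G,X,\sigma)$-basis of $K_0(G,\mathbb{P}(\mc{E}),p(\tau))$.

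Next I would produce the relation $\theta([\mc{O}(-1)])=0$. On $\mathbb{P}(\mc{E})$ there is a $G$-equivariant tautological short exact sequence $0\to\mc{O}(-1)\to f^*\mc{E}\to\mc{Q}\to0$, where $\mc{O}(-1)$ carries its canonical $G$-linearization, the inclusion and quotient are $G$-equivariant, and $\mc{Q}$ is a $G$-equivariant vector bundle of constant rank $n-1$. Since $f^*$ commutes with exterior powers and the total $\lambda$-class is multiplicative on short exact sequences in the $\lambda$-ring $K_0(G,\mathbb{P}(\mc{E}),p(\tau))$, we get
\[\lambda_t(f^*[\mc{E}])=(1+[\mc{O}(-1)]t)\,\lambda_t([\mc{Q}]),\qquad\text{hence}\qquad \lambda_t([\mc{Q}])=\lambda_t(f^*[\mc{E}])\cdot\sum_{j\geq0}(-1)^j[\mc{O}(-1)]^jt^j.\]
Comparing coefficients of $t^n$ and using $\lambda^n[\mc{Q}]=0$ (the $n$-th exterior power of a rank $n-1$ bundle is the zero bundle, equivariantly) gives $\sum_{i=0}^n(-1)^i\lambda^{n-i}[\mc{E}]\,[\mc{O}(-1)]^i=0$, which is exactly $\theta([\mc{O}(-1)])=0$. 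Thus $(\theta)\subseteq\ker c$ and $c$ descends to a surjective ring homomorphism $\bar c:K_0(G,X,\sigma)[t]/(\theta)\to K_0(G,\mathbb{P}(\mc{E}),p(\tau))$.

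To finish I would count ranks. The polynomial $\theta$ has leading coefficient $(-1)^n\lambda^0[\mc{E}]=(-1)^n$, a unit in $K_0(G,X,\sigma)$, so $K_0(G,X,\sigma)[t]/(\theta)$ is a free $K_0(G,X,\sigma)$-module with basis $\{1,t,\dots,t^{n-1}\}$. Under $\bar c$ this basis goes to $\{1,[\mc{O}(-1)],\dots,[\mc{O}(-1)]^{n-1}\}$, a $K_0(G,X,\sigma)$-basis of $K_0(G,\mathbb{P}(\mc{E}),p(\tau))$ by the first paragraph; hence $\bar c$ is an isomorphism of $K_0(G,X,\sigma)$-modules, and being a ring homomorphism it is a ring isomorphism. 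For the graded statement I would tensor this isomorphism with $K_*(G,X,\sigma)$ over $K_0(G,X,\sigma)$ and invoke Corollary~\ref{cor:pbtK0}:
\[K_*(G,\mathbb{P}(\mc{E}),p(\tau))\;\cong\;K_*(G,X,\sigma)\otimes_{K_0(G,X,\sigma)}K_0(G,\mathbb{P}(\mc{E}),p(\tau))\;\cong\;K_*(G,X,\sigma)[t]/(\theta)\]
as graded $K_*(G,X,\sigma)$-algebras.

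The only genuinely non-formal point — hence the main obstacle — is setting up the $G$-equivariant tautological sequence and checking that the resulting $\lambda$-relation is \emph{precisely} $\theta$; once the conventions for $\mathbb{P}(\mc{E})$ and $\mc{O}(-1)$ are fixed so that $\mc{O}(-1)\subset f^*\mc{E}$ is a $G$-subbundle with $G$-bundle quotient, the remaining steps are identical to the non-equivariant argument and reduce to bookkeeping with the $\lambda$-ring structure on $K_0(G,-)$.
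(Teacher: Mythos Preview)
Your proposal is correct and follows essentially the same route as the paper: both use the $G$-equivariant tautological sequence $0\to\mc{O}(-1)\to f^*\mc{E}\to\mc{Q}\to 0$ together with the $\lambda$-ring structure to produce the relation $\theta([\mc{O}(-1)])=0$, and then invoke the projective bundle theorem to identify the module basis. Your presentation is somewhat more explicit about surjectivity of $c$ and about shifting the basis from $\{[\mc{O}(-1)],\dots,[\mc{O}(-n)]\}$ to $\{1,[\mc{O}(-1)],\dots,[\mc{O}(-1)]^{n-1}\}$ via the unit $[\mc{O}(1)]$, whereas the paper expands $\lambda^n([f^*\mc{E}]-[\mc{O}(-1)])$ directly using $\lambda^{j}(-t)=(-1)^jt^j$; these are two phrasings of the same computation.
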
	
	\begin{proof}
		Let $c(\mc{E})=[f^*\mc{E}]$ and $t=[\mc{O}(-1)]$ in $K_0(G, \mathbb{P}(\mc{E}))$. Consider the element $(c(\mc{E})-t)\in K_0(G, \mathbb{P}(\mc{E}))$. We have $(c(\mc{E})-t) = [\mc{F}]$ where $\mc{F}$ is the quotient,
		\[ \mc{O}(-1)\mono f^*\mc{E}\epi \mc{F} \]
		over $\mathbb{P}(\mc{E})$ and hence $\lambda^n(c(\mc{E})-t)= [(\wedge^n\mc{F})]=0$. Using the properties of $\lambda$-structures we get 
		\[ 0=\lambda^n(c(\mc{E})-t)=\sum_{i=0}^{n}\lambda^i(c(\mc{E}))\lambda^{n-i}(-t)\] 
		in $K_0(G, \mathbb{P}(\mc{E}))$. As $t$ is a line bundle, we have $\lambda^1(t)=t$ and $\lambda^i(t)=0$ for $i\geq 1$. By the properties of $\lambda$-operations we have $\lambda^{i}(-t)=(-1)^{i}t^i$ for all $i\geq 0$. Putting this into the equation above we get 
		\[ t^n=\sum_{i=1}^{n}\lambda^i(c(\mc{E}))(-1)^{n-i}t^{n-i} \]
		 and by Theorem~\ref{thm:pbt} we know that $K_0(G,\mathbb{P}(\mc{E}))$ is a free module over the ring $K_0(G,X)$ with basis $ \{t,\ldots,t^{n-1} \} $. Hence, we are done.
		\end{proof}
	Just like in the non-equivariant case, we can use this to calculate $K_n(G, Gr(r,\mc{E}))$ for all Grassmannians. Let $\mathcal{E}$ be a $G$-vector bundle of rank $n$ over $X$. For every partition $\pi=(p_1,p_2,\ldots,p_r)$ of $n$, the flag bundle $f:Flag(\pi,\mathcal{E})\to X$ has an induced $G$-action. As a moduli space, given $t:Y\to X$ and $g\in G$, the action is given by 
    \[ (U_1\into\ldots\into U_r\cong t^*\mathcal{E}) \mapsto ( g^*U_1\into\ldots\into g^*U_r\cong g^*t^*\mathcal{E}\cong t^*\mathcal{E}) \] where $rk(F_i)=\sum_{j\leq i} p_j$ and the final isomorphism is induced by the action on $\mathcal{E}$. If $Y$ has a $G$-action then equivariant morphisms $Y\to Flag(\pi, \mathcal{E})$ correspond to equivariant flags, that is, sequences $G$-subbundles. Therefore, the canonical flag
	\[ F_1\into F_2\into\ldots\into F_r\cong f^*\mathcal{E} \]
	is equivariant. When the partition is of the form $\pi=(1,\ldots,1,n-r)$, we denote this bundle by $Flag(1^r,\mathcal{E})$. These bundles are in fact iterated projective bundles, 
	\[\mathbb{P}(\mathcal{E})\leftarrow Flag((1,1,n-1),\mathcal{E})\leftarrow \ldots \leftarrow Flag((1^r,n-1),\mathcal{E}) \]
	over $X$. We then get the following corollary.
	\begin{corollary}\label{cor:K_completeflag}
		Let $X$ be a $G$-equivariant $S$ scheme, and let $\mc{E}$ be a $G$-vector bundle over $X$ of constant rank $n$. Let $Flag(1^r,\mathcal{E})$ denote the corresponding flag bundle with canonical flag
		\[ F_1\into F_2\into\ldots\into F_r\cong f^*\mathcal{E} \]
		where $f:Flag(1^r,\mathcal{E})\to X$ is the structure map. The K-theory ring $K_*(G, Flag(1^r,\mathcal{E}))$ is generated as a $K_*(G, X)$-algebra by the elements $t_i=[F_{i}]-[F_{i-1}]$ in $K_0(G, Flag(1^r,\mathcal{E}))$. Furthermore, $K_*(G, Flag(1^r,\mathcal{E}))$ is a free $K_*(G, X)$-module with basis,
		\[\{t_1^{a_1}t_2^{a_2}\ldots t_r^{a_r} |0\leq a_i\leq n-i, i\leq r  \} \]
		and hence of rank $\frac{n!}{n-r!}$. 	
		\end{corollary}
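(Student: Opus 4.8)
The plan is to induct on $r$, using the iterated projective bundle structure of $Flag(1^r,\mathcal{E})$ recorded above — namely that $Flag(1^r,\mathcal{E})$ is obtained from $Flag(1^{r-1},\mathcal{E})$ by projectivising the quotient of $\mathcal{E}$ by the top piece of the canonical flag — and feeding each stage into the projective bundle computation established in the preceding corollary (equivalently Theorem~\ref{thm:pbt} together with Corollary~\ref{cor:pbtK0}).

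For the base case $r=1$ one has $Flag(1^1,\mathcal{E})=\mathbb{P}(\mathcal{E})$ with $F_0=0$ and $F_1$ the tautological subbundle $\mathcal{O}(-1)\hookrightarrow f^*\mathcal{E}$, so that $t_1=[F_1]-[F_0]=[\mathcal{O}(-1)]$; the preceding corollary then asserts precisely that $K_*(G,\mathbb{P}(\mathcal{E}),p(\tau))$ is a free $K_*(G,X)$-module with basis $\{1,t_1,\dots,t_1^{n-1}\}$, generated as a $K_*(G,X)$-algebra by $t_1$.

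For the inductive step, let $f':Flag(1^{r-1},\mathcal{E})\to X$ be the $(r-1)$-st stage, with canonical flag $F_1\hookrightarrow\cdots\hookrightarrow F_{r-1}\hookrightarrow (f')^{*}\mathcal{E}$, and put $\mathcal{Q}_{r-1}=(f')^{*}\mathcal{E}/F_{r-1}$, a $G$-equivariant bundle of constant rank $n-r+1$. By construction $Flag(1^r,\mathcal{E})=\mathbb{P}(\mathcal{Q}_{r-1})$, with projection $g$ onto $Flag(1^{r-1},\mathcal{E})$; the tautological sequence $\mathcal{O}(-1)\hookrightarrow g^{*}\mathcal{Q}_{r-1}$ identifies the successive quotient $F_r/g^{*}F_{r-1}$ with $\mathcal{O}(-1)$, so $t_r=[F_r]-[F_{r-1}]=[\mathcal{O}(-1)]$. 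Applying the preceding corollary to $\mathcal{Q}_{r-1}$ over $Flag(1^{r-1},\mathcal{E})$ gives that $K_*(G,Flag(1^r,\mathcal{E}))$ is free over $K_*(G,Flag(1^{r-1},\mathcal{E}))$ with basis $\{1,t_r,\dots,t_r^{n-r}\}$; composing with the inductive hypothesis then yields that $K_*(G,Flag(1^r,\mathcal{E}))$ is a free $K_*(G,X)$-module on $\{t_1^{a_1}\cdots t_r^{a_r}:0\le a_i\le n-i\}$, hence generated as a $K_*(G,X)$-algebra by $t_1,\dots,t_r$, and of rank $\prod_{i=1}^{r}(n-i+1)=n!/(n-r)!$.

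The step I expect to need the most care is the equivariant bookkeeping: verifying that the relative tautological line bundle $\mathcal{O}(-1)$ on each projective bundle, with the $G$-action it inherits from $p(\tau)$, coincides as a $G$-bundle with the quotient $F_i/F_{i-1}$ of the canonical flag, and that the pullbacks $g^{*}F_j$ agree with the canonical subbundles at the next stage. Everything else is a routine unwinding of Theorem~\ref{thm:pbt}, Corollary~\ref{cor:pbtK0}, and the $\lambda$-operation presentation of $K_0$ of a projective bundle; in particular no input beyond the equivariant projective bundle theorem and its consequences recorded above is needed.
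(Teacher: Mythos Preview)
Your proposal is correct and is exactly the argument the paper has in mind: the corollary is stated immediately after the observation that $Flag(1^r,\mathcal{E})$ is an iterated projective bundle, with no further proof given, so your induction on $r$ via the projective bundle formula (Theorem~\ref{thm:pbt}, Corollary~\ref{cor:pbtK0}, and the $\lambda$-presentation) simply spells out the intended reasoning. The equivariant bookkeeping you flag is indeed the only point requiring care, but it is routine since the tautological flag and quotient at each stage are canonically $G$-equivariant by construction.
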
 
		We can use this result to compute the $K$-theory ring of any Grassmannian. 
	\begin{theorem}\label{thm:K_Gr}
		Let $f:Gr(r,\mathcal{E})\to X$ be a Grassmannian over any scheme $X$ with canonical $G$-subbundle $\mathcal{U}_r\into f^*\mathcal{E}$. The $K_*(G,X)$-algebra  $K_*(G,Gr(r,\mathcal{E}))$ is faithfully flat and is generated by $\lambda^i=[\Lambda^i\mathcal{U}_r]$, for $1\leq i\leq r$, in $K_0(G,Gr(r,\mathcal{E}))$. 
		\end{theorem}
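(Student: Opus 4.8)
The plan is to reduce the computation of $K_*(G, Gr(r,\mathcal{E}))$ to the flag bundle case handled in Corollary~\ref{cor:K_completeflag}, following the standard non-equivariant argument (as in Swan or the treatment of Grassmannians via flag varieties) but keeping track of the $G$-action throughout. First I would consider the complete flag bundle $\pi: Flag(1^r,\mathcal{E}) \to Gr(r,\mathcal{E})$ obtained by flagging the canonical subbundle $\mathcal{U}_r \hookrightarrow f^*\mathcal{E}$; this is an iterated projective bundle over $Gr(r,\mathcal{E})$, so by repeated application of Theorem~\ref{thm:pbt} (projective bundle theorem) and Corollary~\ref{cor:pbtK0}, $K_*(G, Flag(1^r,\mathcal{E}))$ is a faithfully flat (indeed free) $K_*(G, Gr(r,\mathcal{E}))$-algebra. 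On the other hand, $Flag(1^r,\mathcal{E}) \to X$ factors through $\mathbb{P}(\mathcal{E})$ and is itself an iterated projective bundle over $X$ (as noted in the paragraph preceding Corollary~\ref{cor:K_completeflag}), so its $K$-theory is explicitly known over $K_*(G,X)$.

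The key step is then a faithfully flat descent argument: since $K_*(G, Flag(1^r,\mathcal{E}))$ is faithfully flat over $K_*(G, Gr(r,\mathcal{E}))$, a class in the latter is detected by its image upstairs, and flatness of $K_*(G, Gr(r,\mathcal{E}))$ over $K_*(G,X)$ can be checked after the faithfully flat base change to $K_*(G, Flag(1^r,\mathcal{E}))$, where the resulting module is a direct summand (via the splitting coming from the iterated projective bundle structure and the Schubert-cell / basis decomposition) of the free module $K_*(G, Flag(1^r,\mathcal{E}))$ over $K_*(G,X)$. Concretely, the complete flag bundle of $\mathcal{E}$ itself maps to $Flag(1^r,\mathcal{E})$, and the comparison of the two module bases — the one in Corollary~\ref{cor:K_completeflag} for $Flag(1^r,\mathcal{E})\to X$ and the one for the full flag bundle — exhibits $K_*(G,Gr(r,\mathcal{E}))$ as a direct summand, hence faithfully flat. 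For the generation statement, I would observe that $K_0(G, Flag(1^r,\mathcal{E}))$ is generated over $K_0(G,X)$ by the line bundle classes $t_i$, and the elementary symmetric functions of the $t_i$ corresponding to $\mathcal{U}_r$ are exactly the classes $\lambda^i = [\Lambda^i \mathcal{U}_r]$; pulling back along $\pi$ is injective (faithful flatness), the symmetric part of the flag $K$-theory is generated by these $\lambda^i$ over $K_0(G,X)$, and it contains the image of $K_*(G, Gr(r,\mathcal{E}))$, so the $\lambda^i$ generate.

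I would carry the steps out in this order: (1) recall the iterated projective bundle presentation $\mathbb{P}(f^*\mathcal{E}/\mathcal{U}_r)$-type towers exhibiting $Flag(1^r,\mathcal{E})$ over $Gr(r,\mathcal{E})$, and deduce faithful flatness of $K_*(G,Flag) $ over $K_*(G, Gr(r,\mathcal{E}))$ from Corollary~\ref{cor:pbtK0}; (2) identify $K_*(G, Flag(1^r,\mathcal{E}))$ as a free $K_*(G,X)$-module with the explicit $t_i$-basis via Corollary~\ref{cor:K_completeflag}; (3) run the descent argument to transfer freeness/faithful flatness down to $Gr(r,\mathcal{E})$; (4) identify the generators $\lambda^i=[\Lambda^i\mathcal{U}_r]$ with elementary symmetric functions in the $t_i$ and conclude the generation statement from injectivity of $\pi^*$. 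The main obstacle I anticipate is step (3): making the descent bookkeeping precise in the $G$-equivariant setting, i.e.\ ensuring that the splitting of $K_*(G,Flag)$ as a $K_*(G,Gr)$-module (and the compatibility of the two $K_*(G,X)$-module structures) is $G$-equivariantly natural, so that faithful flatness descends; this is where one must be careful that Theorem~\ref{thm:pbt}'s equivalences are genuinely $K_*(G,X,\sigma)$-algebra maps, as established in Corollary~\ref{cor:pbtK0}, and that no non-equivariant splitting is secretly being used.
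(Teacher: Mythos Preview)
Your approach is essentially the same as the paper's: pass to the iterated projective bundle $Flag(1^r,\mathcal{E})\to Gr(r,\mathcal{E})\to X$, use Corollary~\ref{cor:K_completeflag} to control $K_*(G,Flag)$ over both $K_*(G,X)$ and $K_*(G,Gr)$, and identify $\lambda^i$ with the elementary symmetric functions $e_i(t_1,\dots,t_r)$. The paper's execution differs in two small but useful ways. For faithful flatness it simply invokes a two-out-of-three argument (freeness of $K_*(G,Flag)$ over both $K_*(G,X)$ and $K_*(G,Gr)$), which is shorter than your descent/direct-summand formulation and avoids the bookkeeping you flagged as the main obstacle. For generation, instead of your ``symmetric part'' argument the paper compares bases directly: the monomials $\{t_1^{a_1}\cdots t_r^{a_r}\mid 0\le a_i\le r-i\}$ are simultaneously a basis of $K_*(G,X)[t_1,\dots,t_r]$ over $K_*(G,X)[e_1,\dots,e_r]$ and of $K_*(G,Flag)$ over $K_*(G,Gr)$, which forces $K_*(G,X)[e_1,\dots,e_r]\to K_*(G,Gr)$ to be surjective. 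This sidesteps the need to set up an $S_r$-action on the quotient ring and to argue that the image of $K_*(G,Gr)$ lies in the invariants---a step in your outline that would otherwise require additional justification.
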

	\begin{proof}
		Given a Grassmannian $Gr(r,\mathcal{E})$, the projective bundle $\mathbb{P}(\mathcal{U}_r)\to Gr(r,\mathcal{E})\to X$ is the flag bundle $Flag((1,r,n-r-1),\mathcal{E})\to X$. We can then recursively construct a sequence 
		\[X\leftarrow Gr(r,\mathcal{E})\leftarrow Flag((1,r,n-r-1),\mathcal{E})\leftarrow Flag((1,1,r,n-r-2),\mathcal{E})\leftarrow\ldots Flag((1^r,\mathcal{E})) \]
		of flag bundles. Let the canonical flag over $Flag(1^r,\mathcal{E})$ be given by
		\[ F_1\into F_2\into\ldots\into F_r\into f^*\mathcal{E}\]
		 with $F_r=\mathcal{U}_r$. By Corollary \ref{cor:K_completeflag} the morphism $K_*(G,Gr(r,\mathcal{E}))\to K_*(G,Flag(1^r,\mathcal{E}))$ is an inclusion and $K_*(G,Flag(1^r,\mathcal{E}))$ is generated as a $K_*(G,Gr(r,\mathcal{E}))$-algebra by $t_i=[F_i]-[F_{i-1}]$ for $0<i\leq r$. In addition, the image of $\lambda^i=[\Lambda^i\mathcal{U}_r]$ in $K_*(G,Flag(1^r,\mathcal{E}))$ is the $i^{th}$ elementary symmetric polynomial $e_i(t_1,\ldots,t_r)$. We therefore get a morphism $K_*(G,X)[e_1,\ldots,e_r]\to K_*(G,Gr(r,\mathcal{E}))$ such that the following diagram commutes
		 \[      \begin{tikzcd}
		 		K_*(G, X)[e_1,\ldots,e_r]\arrow[r]\arrow[d] &K_*(G, X)[t_1,\ldots,t_r]\arrow[d]\\
		 		K_*(G,Gr(r,\mathcal{E}))\arrow[r] &K_*(G,Flag(1^r,\mathcal{E}))
		 		\end{tikzcd}
		 	\] 
		 	where the right vertical map is a quotient map. For any ring $R$ the polynomial ring $R[t_1,\ldots,t_r]$ is a free module over the ring of symmetric polynomials $R[e_1,\ldots,e_r]$ with basis 
		 	\[\{t_1^{a_1}t_2^{a_2}\ldots t_r^{a_r} |0\leq a_i\leq r-i, i\leq r  \}\]
		 	which is precisely the basis of $K_*(G,Flag(1^r,\mathcal{E}))$ as a $K_*(G, Gr(r,\mathcal{E}))$-module. Hence the left vertical map is a surjection. Faithfully flatness follows from the two-out-of-three property.
		\end{proof}
		
		\section{Proof of theorem \ref{thm:main}}
	We now have enough to prove Theorem \ref{thm:main}. For each $i\in \mathbb{N}$, let $[-i,i]$ denote the set of all natural numbers $n$ that satisfy $-i\leq n \leq i$. Then for each $k\in \mathbb{N}$, $\ZZ\times Gr_G^k$ is the directed colimit, $\colim_{i\in N} [-i,i]\times Gr_G(i,\rho^{2i})^k$. We will need the following lemma.
	\begin{lemma}\label{lem:K_Mittag-Leffler}
		For each $k$, the directed system $(([-i,i]\times Gr(i,\rho^{2i}))^k)_{i\in \mathbb{N}}$ does not unveil phantoms in $K(G,\ -)$.
		\end{lemma}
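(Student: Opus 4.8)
The plan is to verify the Mittag--Leffler condition directly by understanding the transition maps in the relevant inverse system after applying $[-,\mathbf{R}\Omega^n K(G,-)]_{\mathcal{H}^G(S)}$, i.e.\ after applying the functors $\pi_n K^G(-)$. Write $Y_i = [-i,i]\times Gr(i,\rho^{2i})$, so that the system in question is $((Y_i)^k)_{i\in I}$, and for each $n$ we must show that the tower $(\,[(Y_i)^k,\mathbf{R}\Omega^n K(G,-)]_{\mathbb{A}^1,G}\,)_i$ is Mittag--Leffler. First I would reduce to the case $k=1$: since $(Y_i)^k$ is a product of $Y_i$'s and $K(G,-)$ sends finite products in $Sm^G_S$ to a computable expression via the K\"unneth-type description obtained in Theorem~\ref{thm:K_Gr} and Corollary~\ref{cor:K_completeflag} (the $K$-theory ring of a product of Grassmannians being a tensor product over $K_*(G,S)$, as will be made explicit when one adapts Joshua's technique as announced in the introduction), the transition maps for the $k$-fold product are $k$-fold tensor powers of those for $k=1$; a tensor power of a pro-system with eventually-stable images again has eventually-stable images, so Mittag--Leffler is inherited.

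For $k=1$, I would compute the tower explicitly. Using Theorem~\ref{thm:K_Gr}, $K_*(G, Gr(i,\rho^{2i}))$ is a free (faithfully flat) $K_*(G,S)$-module, with an explicit finite basis of monomials in the $\lambda$-classes $\lambda^1,\dots,\lambda^i$ of the tautological subbundle of rank $i$ inside $\rho^{2i} = \rho^{\oplus 2i}$. The standard inclusion $\rho^{2i}\hookrightarrow \rho^{2(i+1)}$ together with the rank-shift $Gr(i,\rho^{2i})\hookrightarrow Gr(i+1,\rho^{2i+2})$ (sending a summand $M$ to $M\oplus \rho$, say, matching the stabilization defining $Gr_G$) induces the transition map on $K$-theory; on the $\mathbb{Z}$-factor $[-i,i]$ it is the obvious inclusion of intervals. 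I would identify the transition map on $K_0$ as the ring map sending the tautological class of rank $i+1$ to (tautological class of rank $i$) $+[\rho]$, hence $\lambda$-classes to the corresponding classes twisted by $\lambda$-operations on $[\rho]$; the key point is that on each bounded piece $\{-j,\dots,j\}\times Gr(j,\rho^{2j})$ the image of $[(Y_i)^1,\mathbf{R}\Omega^n K(G,-)]$ stabilizes once $i$ is large relative to $j$, because $K$-theory of the finite-dimensional Grassmannian $Gr(j,\rho^{2j})$ is a fixed finitely generated module and the pro-system of its images under the restriction maps from $Gr(i,\rho^{2i})$ is a descending chain of submodules of a Noetherian module, hence stationary. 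This is exactly the argument that $\mathbb{Z}\times Gr_G$ has K-theory equal to $\lim_i$ of the finite pieces with vanishing $\lim^1$, which is consistent with Theorem~\ref{thm:MT}.

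The main obstacle, and the step I would spend the most care on, is making precise the claim that the images stabilize — equivalently that the relevant inverse system of $K_0(G,X)$-modules (for $X$ ranging over affine test schemes, since $\underline{\pi}_0$ is a presheaf) satisfies Mittag--Leffler uniformly. Concretely one must check: (a) that for fixed $n$ and fixed affine $U\in Sm^G_S$, the groups $K_n(G, U\times Gr(i,\rho^{2i}))$ form a tower whose transition maps have eventually-constant image, which follows from the faithful flatness and explicit finite basis in Theorem~\ref{thm:K_Gr} applied over the base $U$ (so the module is finitely generated over the Noetherian ring $K_*(G,U)$ in each degree, hence its descending chains of images stabilize); and (b) that this stabilization is compatible enough across $U$ and across the $k$-fold product to conclude the presheaf-level Mittag--Leffler condition needed in Definition~\ref{def:phantom}. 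Once (a) and (b) are in hand, Mittag--Leffler for $(([-i,i]\times Gr(i,\rho^{2i}))^k)_i$ in $K(G,-)$ is immediate, and the lemma follows; combined with Theorem~\ref{th:ontopi0} (surjectivity onto $\underline{\pi}_0$ on affines) this feeds into Theorem~\ref{thm:gen_bijection} to give Theorem~\ref{thm:main}.
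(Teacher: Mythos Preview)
Your overall strategy matches the paper's: reduce to showing that the towers $\bigl(K_n(G,([-i,i]\times Gr(i,\rho^{2i}))^k)\bigr)_i$ are Mittag--Leffler, handle the $k$-fold product by a K\"unneth isomorphism (the paper uses Joshua's spectral sequence, which degenerates by flatness from Theorem~\ref{thm:K_Gr}), and then analyze the transition map on the single Grassmannian factor via the $\lambda$-classes of the tautological bundle. The divergence, and the genuine gap, is in how you conclude Mittag--Leffler for $k=1$.

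The paper does \emph{not} argue that images eventually stabilize; it proves the much stronger statement that each transition map
\[
K_*(G,Gr((i+1)\textsl{g},\rho^{2(i+1)})) \longrightarrow K_*(G,Gr(i\textsl{g},\rho^{2i}))
\]
is \emph{surjective}: the map sends $[\mathcal{U}_{(i+1)\textsl{g}}]-[\rho]$ to $[\mathcal{U}_{i\textsl{g}}]$ and is a $\lambda$-ring homomorphism, so the generators $\lambda^j[\mathcal{U}_{i\textsl{g}}]$ lie in the image. Surjectivity of every transition map makes the Mittag--Leffler condition trivial.

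Your alternative argument, by contrast, does not work as written. You assert that the images form ``a descending chain of submodules of a Noetherian module, hence stationary''; but Noetherian modules satisfy the ascending chain condition, not the descending one (e.g.\ $2^n\mathbb{Z}$ in $\mathbb{Z}$), so this inference is invalid. You also assume that $K_*(G,U)$ is Noetherian for arbitrary affine $U\in Sm^G_S$, which is not established anywhere and is generally false. Finally, the introduction of auxiliary affines $U$ and a ``presheaf-level'' Mittag--Leffler condition in your step~(b) is unnecessary: Definition~\ref{def:phantom} only asks that the single tower of groups $[X_i,\mathbf{R}\Omega^nE]_{\mathcal{H}^G(S)}\cong K_n(G,X_i)$ be Mittag--Leffler, not any family indexed by test objects. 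Once you replace the stabilization argument with the surjectivity computation above, the proof goes through along the lines you outlined.
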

	\begin{proof}
		By Theorem \ref{thm:MT} we have a bijection 
		\[ [X,\mathbf{R}\Omega^n\ZZ\times Gr_G]_{\mc{H}^G(S)}\xrightarrow{\alpha} K_n(G, X)\]
		for all schemes $X$ and all $n\geq 0$. By Lemma \ref{lem:lim1} it is therefore enough to prove that $K_n(G,(([-i,i]\times Gr(i,\rho^{2i}))^k)_{i\in \mathbb{N}})$ is Mittag-Leffler for all $n,k\geq 0$. We will achieve this by showing that the morphism of graded rings 
		\[K_*(G,([-i-1,i+1]\times Gr((i+1)\textsl{g},\rho^{2(i+1)}))^k)\to K_*(G,([-i,i]\times Gr(i\textsl{g},\rho^{2i}))^k) \]
		is surjective for all $i\geq 0$. As $K_*(G,X\coprod Y)\iso K_*(G,X)\oplus K_*(G,Y)$  it is enough to prove surjectivity of 
        \[K_*(G, Gr((i+1)\textsl{g},\rho^{2(i+1)})^k)\to K_*(G, Gr(i\textsl{g},\rho^{2i})^k) \]
        for all $k\geq 0$.
        For $k=1$ the induced ring homomorphism 
		\[f_i:K_*(G,Gr((i+1)\textsl{g},\rho^{2(i+1)}))\to K_*(G,Gr(i\textsl{g},\rho^{2i})) \] 
		sends $[\mathcal{U}_{\textsl{g}(i+1)}]-[\rho]$ to $[\mathcal{U}_{\textsl{g}i}]$ in $K_0(G,Gr(i\textsl{g},\rho^{2i}))$. The homomorphism restricted to $K_0(G,-)$ preserves the $\lambda$-ring structure and hence $f_i$ is surjective for all $i$. For the general case note that as a consequence of Theorem \ref{thm:K_Gr} for any $f:Y\to X$ and $Gr(r,\mc{E})\to X$ we have
        \[ K_*(G,Gr(r,f^*\mc{E}))\cong K_*(G,Y\times_X Gr(r,\mc{E}))\cong K_*(G,Y)\otimes_{K_*(G, X)} K_*(G, Gr(r,\mc{E})) \]
        and applying this to
        $Gr(i\textsl{g},\rho^{n})\times Gr(j\textsl{g},\rho^{m})$ we get \[K_*(G,Gr(i\textsl{g},\rho^{n}))\otimes_{K_*(G,X)} K_*(G,Gr(j\textsl{g},\rho^{m}))\cong K_*(G,Gr(i\textsl{g},\rho^{n})\times Gr(j\textsl{g},\rho^{m})).\]
		Hence $\bigotimes^k_{j=0} K_*(G,Gr(i\textsl{g},\rho^{2i}))\iso K_*(G,Gr(i\textsl{g},\rho^{2i})^k)$ for all $k$ where the tensor product on the left is as $K_*(G,X)$-algebras. Combined with the $k=1$ case this shows that the desired map is surjective.
		\end{proof}
		We now have enough to prove Theorem~\ref{thm:main}.
	\begin{proof}[Proof of Theorem $\ref{thm:main}$]
		By Lemma~\ref{lem:K_Mittag-Leffler} and Theorem~\ref{th:ontopi0}, we can apply Theorem~\ref{thm:gen_bijection} to $\ZZ\times Gr_G=\colim_{i\in \mathbb{N}} [-i,i]\times Gr(i\textsl{g},\rho^{2i})$, giving us the desired result.
		\end{proof}
	As in the non-equivariant case, the above bijection respects algebraic structures hence we get the equivariant analogue of \cite[Theorem~2.3.1]{R10}.
	\begin{corollary}
	\label{Algebraic Structures}
		In the category $\mc{H}^G(S)$ there exists a unique structure of a special $\lambda$-ring with duality on $\ZZ\times Gr_G$ such that the corresponding $\lambda$-ring with duality structure on $K_0(G,X)$ is the standard one for all $X\in Sm^G_S$.
		\end{corollary}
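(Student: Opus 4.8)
The plan is to transport the standard special $\lambda$-ring with duality structure from the presheaf $K_0(G,-)$ onto the object $\ZZ\times Gr_G$ along the bijection of Theorem \ref{thm:main}, exploiting the fact that both the existence and the uniqueness of such a structure reduce to finitary, purely equational data.

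First I would recall that a special $\lambda$-ring with duality structure on an object $F$ of a category with finite products amounts to a collection of morphisms
\[ +\,,\ \times\colon F\times F\to F,\qquad -\colon F\to F,\qquad \lambda^k\colon F\to F\ (k\geq 0),\qquad (\ )^\vee\colon F\to F,\qquad 0,\,1\colon \ast\to F \]
subject to a (countable) list of axioms: the commutative-ring axioms, the relations $\lambda^0=1$, $\lambda^1=\mathrm{id}$ and $\lambda^k(x+y)=\sum_i\lambda^i(x)\lambda^{k-i}(y)$, the two families of \emph{special} identities expressing $\lambda^k(xy)$ and $\lambda^k(\lambda^m(x))$ through the universal integer polynomials $P_k$ and $P_{k,m}$, and the duality axioms ($(\ )^\vee$ an involutive ring homomorphism with $\lambda^k(x^\vee)=(\lambda^k x)^\vee$). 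Crucially, each single axiom is an equality of two finite composites built from the structure morphisms together with the diagonals and projections of the products $F^{\times r}$; thus it involves only finitely many morphisms of the form $F^{\times r}\to F$.

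Next I would note that on the presheaf $K_0(G,-)$ on $Sm^G_S$ the standard structure is furnished by the natural transformations coming from $\oplus$, $\otimes$, $\wedge^k$, $\mathcal{E}\mapsto\mathcal{E}^\vee$ and the classes $0$, $[\mathcal{O}]$; these are morphisms in $M^G(S)$, and they satisfy all the above axioms objectwise, hence as morphisms in $M^G(S)$. Theorem \ref{thm:main} with $k=0$ gives, for each $n$, a bijection
\[ \alpha\colon [(\ZZ\times Gr_G)^n,\ \ZZ\times Gr_G]_{\mc{H}^G(S)}\ \iso\ Hom_{M^G(S)}\bigl(K_0(G,-)^n,\,K_0(G,-)\bigr), \]
and by construction $\alpha$ is the action on Hom-sets of the functor $\underline{\pi}_0\colon\mc{H}^G(S)\to M^G(S)$, under the identifications $\underline{\pi}_0(\ZZ\times Gr_G)\cong K_0(G,-)$ and $\underline{\pi}_0\bigl((\ZZ\times Gr_G)^n\bigr)\cong K_0(G,-)^n$, the latter because $[-,E\times F]_{\mc{H}^G(S)}\cong[-,E]_{\mc{H}^G(S)}\times[-,F]_{\mc{H}^G(S)}$. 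Being a functor preserving finite products, $\alpha$ respects composition and products of morphisms and carries diagonals and projections to diagonals and projections. I would then \emph{define} the structure morphisms of $\ZZ\times Gr_G$ to be $\alpha^{-1}$ of the corresponding standard operations on $K_0(G,-)$. Since $\alpha$ is a bijection respecting composition, diagonals and projections, every defining identity for a special $\lambda$-ring with duality — an equality between finite composites of the standard operations on $K_0(G,-)$ — pulls back under $\alpha^{-1}$ to the very same identity among the operations just defined on $\ZZ\times Gr_G$. So $\ZZ\times Gr_G$ inherits a special $\lambda$-ring with duality structure whose image under $\underline{\pi}_0$, evaluated at any $X\in Sm^G_S$, is the standard structure on $K_0(G,X)$. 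For uniqueness, any such structure on $\ZZ\times Gr_G$ inducing the standard one on each $K_0(G,X)$ has every structure morphism sent by $\alpha$ to the corresponding standard operation, so injectivity of $\alpha$ forces it to coincide with the one constructed. This is the equivariant analogue of \cite[Thm~2.3.1]{R10}, and the argument is formally identical; there is no deep obstacle here — the substantive input is Theorem \ref{thm:main} itself — and the only points needing a moment's care are that $\underline{\pi}_0$ preserves finite products (so that $n$-ary operations correspond correctly) and that the countably many special $\lambda$-identities are each \emph{individually} finitary, so that they transfer one at a time.
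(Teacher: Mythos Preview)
Your proposal is correct and follows exactly the approach the paper indicates: the paper gives no detailed proof of this corollary, merely noting that ``the above bijection respects algebraic structures'' and citing \cite[Thm~2.3.1]{R10}, and your argument is precisely the standard unpacking of that remark --- transport the finitary operations and identities along the product-preserving functor $\underline{\pi}_0$ using the bijections of Theorem~\ref{thm:main}. Your care in noting that $\underline{\pi}_0$ preserves finite products (so $n$-ary operations and their diagrammatic axioms transfer) and that each special $\lambda$-identity is individually finitary is exactly what is needed to make the one-line reference to Riou rigorous.
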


\end{document}